\newtheorem{prop}{Proposition}[section]
\newtheorem{rem}{Remark}[section]
\newcommand{\PiNabla}{\Pi^\nabla_k}
\newcommand{\PiZero}{\Pi^0_k}
\newcommand{\PiGrad}{{\bm \Pi}^0_{k-1}}
\newcommand{\m}[1]{m_{\,#1}}
\newcommand{\mIndex}[2]{m_{(\,#1\,,#2)}}
\newcommand{\mB}{{\bm m}}
\newcommand{\mVect}[2]{{\bm m}_{#1,\,#2}}
\newcommand{\mMat}[4]{{\bm M}_{\,#1,\,#2\,;\,#3,\,#4}}
\newcommand{\mMatB}{{\bm M}}
\newcommand{\xPerp}{{\bm x}^\perp}
\newcommand{\mPerp}{{\bm m}^\perp}
\newcommand{\x}{{\bm x}}
\newcommand{\vvh}{{\bm v}_h}
\newcommand{\uuh}{{\bm u}_h}
\newcommand{\pp}{{\bm p}}
\newcommand{\bb}{{\bm b}}
\newcommand{\ph}{p_h}
\newcommand{\vv}{{\bm v}}
\newcommand{\uu}{{\bm u}}
\newcommand{\ww}{{\bm w}}
\newcommand{\ff}{\bm f}
\newcommand{\nn}{\bm n}
\newcommand{\p}{p}
\newcommand{\MM}{{\bm M}}
\newcommand{\dE}{\,\text{d}E}
\newcommand{\de}{\,\text{d}e}
\newcommand{\aalpha}{{\bm \alpha}}
\newcommand{\bbeta}{{\bm \beta}}
\newcommand{\ggamma}{{\bm \gamma}}
\newcommand{\ddelta}{{\bm \delta}}
\newcommand{\numberset}{\mathbb}
\newcommand{\N}{\numberset{N}}
\newcommand{\Pk}{\numberset{P}}
\renewcommand{\epsilon}{\varepsilon}
\renewcommand{\theta}{\vartheta}
\renewcommand{\rho}{\varrho}
\renewcommand{\phi}{\varphi}
\newcommand{\fg}{\boldsymbol{\phi}}
\newcommand{\dd}{{\rm div}}
\newcommand{\ddVect}{\textbf{div}}
\newcommand{\gr}{\nabla}
\newcommand{\Gr}{\boldsymbol{\nabla}}
\newcommand{\epseps}{\boldsymbol{\epsilon}}
\newcommand{\grepseps}{\mathlarger{\epseps}}
\newcommand{\dl}{\boldsymbol{\Delta}}
\newcommand{\VV}{\mathbf{V}}
\def\P0{{\Pi^{0}_k}}
\def\PN{{\Pi^{\Gr}_k}}
\def\Peps{{\Pi^{\grepseps}_k}}
\def\PP0{{\boldsymbol{\Pi}^{0}_{k-1}}}
\theoremstyle{plain}
\begin{document}

\title{Bricks for the mixed high-order virtual element method:\\
projectors and differential operators}

\author[add1]{F.~Dassi}
\ead{franco.dassi@unimib.it}
\author[add1]{G.~Vacca}
\ead{giuseppe.vacca@unimib.it}

\address[add1]{Department of Mathematics and Applications, University of Milano - Bicocca,\\ 
Via Cozzi 53, I-20153, Milano (Italy)}

\begin{abstract}
We present the essential instruments to deal with Virtual Element Method (VEM)
for the resolution of partial differential equations in mixed form.
Functional spaces, degrees of freedom, projectors and differential operators
are described emphasizing how to build them in a virtual element framework and for a general approximation order.
To achieve this goal, it was necessary to make a deep analysis on polynomial spaces and decompositions.
Finally, we exploit such ``briks'' to construct virtual element approximations of 
Stokes, Darcy and Navier-Stokes problems and we provide a series of examples 
to numerically verify the theoretical behavior of high-order VEM.
\end{abstract}

\begin{keyword}
Virtual Element Method\sep
Mixed Problems \sep
Polygonal meshes \sep
Projectors\sep 
High order.
\MSC[2010] 65N30
\end{keyword}

\maketitle

\section{Introduction}

The virtual element method (VEM) was introduced in \cite{volley, autostoppisti} 
as an extension of finite element method to general polygonal/polyhedral meshes.

The virtual element discrete spaces are similar to the usual polynomial spaces with the addition of suitable (and unknown) non-polynomial functions that are implicitly defined as the solutions of a proper PDE on each element of the decomposition.
The main idea of the  the method is 
to define approximated discrete forms computable \emph{only} via degree of freedom values.
Moreover, it does not approximate non-polynomial test and trial functions at the integration points, but
it exploits some polynomial projections which are \emph{exactly} computed starting from the degrees of freedom.
Using such projections, VEM can handle very general polygonal/polyhedral meshes without the need to integrate complex non-polynomial functions on the elements (as polygonal FEM do) and without loss of accuracy. 
We refer to \cite{volley, projectors, BBMR:2016, BBMRmixed:2016, BBMRserendipity:2016, BLR-stab, brenner:2018} for a in-depth theoretical analysis of the virtual elements features.

The Virtual Element Method has been developed successfully for a large range of mathematical and engineering problems,
we mention, as sample, the very brief list of 
papers~\cite{BBM:2013, BFM:2014, CBP:2017, BMRR:2017, WRRH:2017, ADLP:2017, CBP:2018, BBDMR:2018},
while for the specific topic of implementation aspects related to the VEM 
we mention~\cite{Russo:2016, BPP:2017, BDR:2017, BB:2017, ABSV:2017, BBDMRserendipity:2018, Mascotto:2018, DM:2018, CGMV:2018, BRV:2018}. 
Concernig the mixed PDEs we refer to~\cite{LLC:2017,  CGM:2016, CGS:2018, GMS:2018, CW:2018}
as a sample of VEM papers dealing with such kind of problem,
and to~\cite{LVY:2014, CFQ:2017, BdPD:2018, dPK:2018}
as a representative list of papers treating the same topic with different polytopal technologies. 

In the context of  Stokes or Darcy flows and 
in many physical applications such as 
the models for precipitation and for flows in root-soil
(see for instance \cite{Wheeler, Formaggia, Berardi1, Berardi2} and the references therein), 
the use of general polytopal meshes can be very useful so 
a virtual element approach is particularly appealing. 

The potentiality of the VEM is not limited to the meshing aspect.
Indeed, the flexibility of the virtual element framework has been exploited to construct a $H^1$-conforming virtual element space particularly suited for the mixed problems
~\cite{ABMV:2014,BLV:2017,vacca:2018,BLV:2018,BMV:2018}.
By choosing a suitable pressure space, 
the virtual element approach leads to an \emph{exactly divergence-free} discrete velocity kernel. 
Such property is really important to solve PDEs associated with incompressible fluid flows and 
we further underline that such property is not shared by most of the standard mixed finite element methods, 
where it is imposed only in a weak sense \cite{benchmark, john-linke-merdon-neilan-rebholz:2017}.

In the present contribution we show in detail the practical aspects of the high-order schemes developed in 
\cite{BLV:2017,vacca:2018,BLV:2018}.
We stress that such definition of the virtual elements and the associated degrees of freedom 
is more involved with respect to the \textit{plain-vanilla} $H^1$-conforming VEM space \cite{volley}.
Then, since a \textit{user manual} can be very helpful for people with some experience 
in implementing the Virtual Element scheme, we focus on the explicit construction of ``VEM bricks''
(projectors and discrete forms) to deal with such kind of discretization.
More specifically,
we give the practical instructions
for the computation of the $L^2$--projection, the $\Gr$--projection and the $\epseps$--projection via the degrees of freedom
in the spirit of the hitchhiker's guide~\cite{autostoppisti}.
Moreover, we make a wide variety of numerical tests to show the practical performance of the method for the mixed problems,
underlining the robustness of the scheme with respect to high-order degree of accuracy.

The paper is organized as follows. 
In Section \ref{sec:def} we introduce same definitions and preliminaries and we fix the nations. 
In Section \ref{sec:VEM} we review the family of Virtual Elements 
presented in \cite{vacca:2018} and we introduce the stiffness matrices associated with the mixed problems.
In Section \ref{sec:proj} we explicitly show how to compute the polynomial projections using (as unique information) the degree of freedom values. 
In Section \ref{sec:tests} we analyse the algebraic form of the linear system arising from the virtual element discretization and
we present
several numerical tests which highlight the actual performance of our approach
for the Stokes, the Darcy and the Navier--Stokes equation also for high-order polynomial degree.
Finally, we draw some conclusions.\\

\noindent \textbf{Notation.}~We will follow the usual notation for the differential operators.
Hence the symbols $\gr$ and $\Delta$  denote the gradient and Laplacian for scalar functions, while 
$\dl $, $\Gr$, and $\dd$ denote the vector Laplacian,  the gradient operator
and the divergence  for vector fields, respectively.
Furthermore for a vector field $\vv$ we denote with $\epseps(\vv)$ the symmetric part of the gradient of $\vv$, i.e.
\[
\epseps(\vv) := \frac{\Gr \vv + (\Gr \vv)^{\rm T}}{2} \,.
\]
Finally $\ddVect$ denotes the vector valued divergence for matrix fields.

%
%
%
%
%
%
%
%
%
%
%
%
%
%
%
%
%
%
%
%
%

\section{Definitions \& preliminaries}
\label{sec:def}
In this section we introduce the basic mathematical notation and tools 
to deal with the Virtual Element Method.
From now on let $E \subset\mathbb{R}^2$ be a polygon, we will denote by $\x_E$, $h_E$, $|E|$ the centroid, the diameter and the measure of $E$, respectively.
Let $n \in \N$ and
let $\mathbb{P}_n(E)$, $[\mathbb{P}_n(E)]^2$, $[\mathbb{P}_n(E)]^{2 \times 2}$ be 
the space of scalar, vectorial and matrix polynomials defined on $E$ 
of degree less or equal to $n$, respectively (with the extended notation $\Pk_{-1}(E)=\{0\}$). 
The dimension of such spaces are 
\begin{equation*}
 \dim(\mathbb{P}_n(E)) =   \pi_n := \frac{(n+1)(n+2)}{2}\qquad
\dim([\mathbb{P}_n(E)]^2) = 2 \, \pi_n \qquad
\dim([\mathbb{P}_n(E)]^{2 \times 2}) = 4 \, \pi_n
\,.
\end{equation*}

\vspace{1em}

One of the main tool exploited in the VEM is the so-called \emph{scaled-monomials}. Given a multi-index $\aalpha:=(\alpha_1,\,\alpha_2)$ with  $|\aalpha|=\alpha_1+\alpha_2$, 
a scaled monomial is defined as 
\[
\m{\aalpha} := \left(\frac{x-x_E}{h_E}\right)^{\alpha_1}
\left(\frac{y-y_E}{h_E}\right)^{\alpha_2}\,.
\]
From now on we refer to the null monomial by $\m{\emptyset}$, i.e. $\m{\emptyset}=0$.

With a slight abuse of notation we may denote the scaled monomial $\m{\aalpha}$
with the notation $m_i$, where the relation between the one dimensional index $i$ and the   multi-index $\aalpha$ is given by the natural correspondence
\begin{equation}
1 \mapsto (0, \, 0) \,, \qquad
2 \mapsto (1, \, 0) \,, \qquad 
3 \mapsto (0, \, 1) \,, \qquad
4 \mapsto (2, \, 0) \,, \qquad
5 \mapsto (1, \, 1) \,, \qquad
\dots
\label{eqn:numbering}
\end{equation}
It is easy to show that the set 
\begin{equation}
\mathbb{M}_n(E) :=\left\{ \m{\aalpha}\::\: 0\leq |\aalpha|\leq n\right\}
:= \left\{ m_{i}\::\: 1\leq i \leq \pi_n\right\}\,,
\label{eqn:monoBasis}
\end{equation}
is a basis for $\mathbb{P}_n(E)$. Moreover for any $m \leq n$ we denote with 
\[
\widehat{\mathbb{P}}_{n \setminus m}(E) := {\rm span}(\m{\aalpha}\::\: m+1\leq |\aalpha|\leq n)
\]
i.e. the set of polynomial of degree $n$ which monomials have degree strictly greater than $m$.

\vspace{1em}

The definition of scaled monomial can be extended to the vectorial monomial.
Let $\aalpha:=(\alpha_1,\,\alpha_2)$ and $\bbeta:=(\beta_1,\,\beta_2)$ be two multi-indexes,
then we define a vectorial scaled monomial as
$$
\mVect{\aalpha}{\bbeta} := \binom{\m{\aalpha}}{\m{\bbeta}}\,.
$$
Also in this case, it is easy to show that the set 
\begin{equation}
[\mathbb{M}_n(E)]^2 :=\left\{ \mVect{\aalpha}{\emptyset}\::\: 
0\leq |\aalpha|\leq n\right\}\cup
\left\{\mVect{\emptyset}{\bbeta}\::\: 
0\leq |\bbeta|\leq n\right\}
:= \left\{ \mB_i \::\:1 \leq i \leq  2\,  \pi_n  \right\} \,,\label{eqn:monoBasisVect}
\end{equation}
is a basis for the vectorial polynomial space $[\mathbb{P}_n(E)]^2$, where we implicitly use the natural correspondence between on dimensional indexes and double multi-indexes (extending correspondence \eqref{eqn:numbering}).

\vspace{1em}

\begin{rem}
Note that the following polynomials decomposition holds
$$
[\mathbb{P}_n(E)]^2 = \nabla\,\mathbb{P}_{n+1}(E)\oplus\xPerp\mathbb{P}_{n-1}(E)\,,
$$
where $\xPerp:=(y,-x)^t$.
\\
\noindent
In particular for each ${\bm p}_n\in[\mathbb{P}_{n}(E)]^2$, 
there exist unique $p_{n+1}\in \widehat{\mathbb{P}}_{n+1 \setminus 0}(E) $ and $q_{n-1}\in\mathbb{P}_{n-1}(E)$
such that
\begin{equation}
{\bm p}_n = \nabla\,p_{n+1} + \xPerp\,q_{n-1}\,.
\label{eqn:decomp}
\end{equation}
\label{prop:decomp}
\end{rem}

The decomposition in Remark~\ref{prop:decomp} is essential 
to define projector operators
and consequently to proceed with a virtual element analysis 
for a large variety of PDEs. 
Unfortunately, finding such decomposition for a generic vectorial polynomial ${\bm p}_n$ is not an easy task, but, if we consider scaled monomials,
we found a straightforward recipe to get it.

\begin{prop}
Consider the  vectorial monomials 
$\mVect{\aalpha}{\emptyset}$ and $\mVect{\emptyset}{\bbeta}\in[\mathbb{M}_n(E)]^2$, 
with 
$\aalpha =(\alpha_1,\,\alpha_2)$ and $\bbeta =(\beta_1,\,\beta_2)$.
Then referring to \eqref{eqn:decomp}, the following scaled decompositions hold
\begin{gather}
\mVect{\aalpha}{\emptyset} := \frac{h_E}{|\aalpha|+1}\nabla \mIndex{\alpha_1+1}{\alpha_2} + 
\frac{\alpha_2}{|\aalpha|+1} \mPerp \mIndex{\alpha_1}{\alpha_2-1}\,,
\label{eqn:decompX}
\\
\mVect{\emptyset}{\bbeta} := \frac{h_E}{|\bbeta|+1}\nabla \mIndex{\beta_1}{\beta_2+1} - 
\frac{\beta_1}{|\bbeta|+1} \mPerp \mIndex{\bbeta_1-1}{\beta_2}\,,
\label{eqn:decompY}
\end{gather}
where $\mPerp := (\m{(0,1)},\,-\m{(1,0)})^t$.
\label{prop:decompMono}
\end{prop}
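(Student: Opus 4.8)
The plan is to prove both identities by a direct computation that rests on the elementary way first-order differential operators act on scaled monomials, and then to appeal to Remark~\ref{prop:decomp} for the uniqueness of the two components and for their membership in the correct polynomial subspaces.

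First I would record the differentiation rule for scaled monomials: since $\m{\aalpha} = \xi^{\alpha_1}\eta^{\alpha_2}$ with $\xi := (x-x_E)/h_E$ and $\eta := (y-y_E)/h_E$, one has
\begin{equation*}
\partial_x\,\mIndex{\alpha_1}{\alpha_2} = \frac{\alpha_1}{h_E}\,\mIndex{\alpha_1-1}{\alpha_2}\,,\qquad
\partial_y\,\mIndex{\alpha_1}{\alpha_2} = \frac{\alpha_2}{h_E}\,\mIndex{\alpha_1}{\alpha_2-1}\,,
\end{equation*}
with the convention that a monomial carrying a negative index is the null monomial $\m{\emptyset}=0$; it always appears multiplied by a vanishing coefficient, so the formulas make sense. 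Consequently $\nabla\,\mIndex{\alpha_1+1}{\alpha_2} = h_E^{-1}\big((\alpha_1+1)\,\mIndex{\alpha_1}{\alpha_2},\ \alpha_2\,\mIndex{\alpha_1+1}{\alpha_2-1}\big)^t$, while, using $\m{(0,1)}=\eta$ and $\m{(1,0)}=\xi$, a plain multiplication gives $\mPerp\,\mIndex{\alpha_1}{\alpha_2-1} = \big(\mIndex{\alpha_1}{\alpha_2},\ -\mIndex{\alpha_1+1}{\alpha_2-1}\big)^t$.

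Inserting these two expressions into the right-hand side of \eqref{eqn:decompX} and collecting components, the coefficient of $\mIndex{\alpha_1}{\alpha_2}$ in the first component is $\frac{(\alpha_1+1)+\alpha_2}{|\aalpha|+1}=1$ because $|\aalpha|=\alpha_1+\alpha_2$, whereas in the second component the two contributions proportional to $\mIndex{\alpha_1+1}{\alpha_2-1}$ cancel; hence the right-hand side equals $(\m{\aalpha},0)^t=\mVect{\aalpha}{\emptyset}$. Identity \eqref{eqn:decompY} follows from the analogous computation with the roles of the two coordinates exchanged: this time the first components cancel and the second ones sum to $\frac{(\beta_2+1)+\beta_1}{|\bbeta|+1}\,\m{\bbeta}=\m{\bbeta}$. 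To conclude that this is genuinely the decomposition \eqref{eqn:decomp}, I would note that $\mIndex{\alpha_1+1}{\alpha_2}$ has degree $|\aalpha|+1\ge 1$, so it lies in $\widehat{\mathbb{P}}_{n+1\setminus 0}(E)$, and $\mIndex{\alpha_1}{\alpha_2-1}\in\mathbb{P}_{n-1}(E)$ (or is null), after which uniqueness is supplied by Remark~\ref{prop:decomp}. There is no real obstacle here: the statement collapses to the one-line differentiation identity above, and the only delicate point is the harmless bookkeeping of the boundary cases $\alpha_2=0$ (respectively $\beta_1=0$), handled by the null-monomial convention.
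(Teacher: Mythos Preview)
Your proof is correct and follows essentially the same approach as the paper: both compute $\nabla\mIndex{\alpha_1+1}{\alpha_2}$ and $\mPerp\mIndex{\alpha_1}{\alpha_2-1}$ explicitly and observe that the stated linear combination reproduces $\mVect{\aalpha}{\emptyset}$. Your version is in fact slightly more detailed, as you carry out the cancellation explicitly and address the boundary cases and the membership of the two pieces in the correct polynomial subspaces, whereas the paper simply remarks that the result follows from a ``proper linear combination''.
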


\begin{proof}
We show the decomposition in Equation~\eqref{eqn:decompX},
the one in \eqref{eqn:decompY} follows the same strategy.
We compute the following quantities
\begin{equation}
\nabla \mIndex{\alpha_1+1}{\alpha_2} = \frac{1}{h_E}
\binom{(\alpha_1+1)\,\mIndex{\alpha_1}{\alpha_2}}
{\alpha_2\,\mIndex{\alpha_1+1}{\alpha_2-1}}
\label{eqn:firstPartProp}
\end{equation}
and
\begin{equation}
\mPerp \mIndex{\alpha_1}{\alpha_2-1} = 
\binom{\mIndex{\alpha_1}{\alpha_2}}{-\mIndex{\alpha_1+1}{\alpha_2-1}}\,.
\label{eqn:secondPartProp}
\end{equation}
Note that the coefficient $1/h_E$  in \eqref{eqn:firstPartProp}
is due to the chain derivative rule.
Therefore the choice of the multi-indexes  on the right hand side of Equation~\eqref{eqn:decompX} 
produces two vectorial polynomials
with the same monomial at the same components.
Notice that, according to \eqref{eqn:decomp}, the gradient component of the decomposition has strictly positive degree.  
Decomposition \eqref{eqn:decompX} comes from a proper linear combination of~\eqref{eqn:firstPartProp} and~\eqref{eqn:secondPartProp}.
\end{proof}

\begin{rem}
Proposition~\ref{prop:decompMono}
is an easy tool to compute the decomposition \eqref{eqn:decomp} for any general polynomial ${\bm p}_n \in [\mathbb{P}_n(E)]^2$
and represents a key ingredient in the implementation of the proposed schemes.
\end{rem}

Finally we consider the matrix polynomial space $[\mathbb{P}_n(E)]^{2\times 2}$
and we define the matrix scaled monomials
\begin{equation*}
\mMat{\aalpha}{\bbeta}{\ggamma}{\ddelta}:=
\left(
\begin{array}{cc}
m_{\aalpha} &m_{\bbeta}\\
m_{\ggamma} &m_{\ddelta}\\
\end{array}
\right)\,,
\end{equation*}
where $\aalpha,\,\bbeta,\,\ggamma$ and $\ddelta$ are multi-indexes.
We build a basis of $[\mathbb{P}_n(E)]^{2\times 2}$ 
starting from matrix scaled monomials  in the natural way (using again the usual correspondence between indexes and multi-indexes)
\begin{equation}
\begin{aligned}
{[\mathbb{M}_n(E)]}^{2 \times 2} :=& 
\left\{ \mMat{\aalpha}{\emptyset}{\emptyset}{\emptyset}\::\: 
0\leq |\aalpha|\leq n\right\} \cup 
\left\{\mMat{\emptyset}{\bbeta}{\emptyset}{\emptyset}\::\: 
0\leq |\bbeta|\leq n\right\} \cup  \\ 
&  \left\{ \mMat{\emptyset}{\emptyset}{\ggamma}{\emptyset}\::\: 
0\leq |\ggamma|\leq n\right\} \cup 
\left\{\mMat{\emptyset}{\emptyset}{\emptyset}{\ddelta}\::\: 
0\leq |\ddelta|\leq n\right\} 
\\
:=& \left\{\MM_i \::\: 
 1\leq i \leq 4 \, \pi_n \right\} 
\end{aligned}
\label{eqn:monoBasisMat}
\end{equation}
that clearly is a basis for  $[\mathbb{P}_n(E)]^{2\times 2}$.

\vspace{3ex}

\begin{rem}
Note that in Definitions \eqref{eqn:monoBasis}, \eqref{eqn:monoBasisVect} and \eqref{eqn:monoBasisMat} we consider both the index and the multi-index notations. Both notations  will be employed indifferently when we are dealing with scaled monomials.
\end{rem}


%
A key ingredient in the VEM construction is represented by the polynomial projections that will play a fundamental role in the construction of the approximated virtual elements form.
For any $n \in \N$ 
we define the following polynomial projections:
\begin{itemize}
\item the $\boldsymbol{L^2}$\textbf{--projection} $\Pi_n^{0} \colon [L^2(E)]^2 \to [\Pk_n(E)]^2$, defined for all $\vv \in [L^2(E)]^2$ by
\begin{equation}
\label{eq:P0_k^E}
\int_E \mathbf{q}_n \cdot (\vv - \, {\Pi}_{n}^{0}  \vv) \, {\rm d} E = 0 \qquad  \text{for all $\mathbf{q}_n \in [\Pk_n(E)]^2$,} 
\end{equation} 
with obvious extension 
for matrix functions 
$\boldsymbol{\Pi}_{n}^{0} \colon [L^2(E)]^{2 \times 2} \to [\Pk_{n}(E)]^{2 \times 2}$,

\item the $\boldsymbol{\nabla}$\textbf{--projection} ${\Pi}_{n}^{\Gr} \colon [H^1(E)]^2 \to [\Pk_n(E)]^2$, defined for all $\vv \in [H^1(E)]^2$ by
\begin{equation}
\label{eq:Pn_k^E}
\left\{
\begin{aligned}
& \int_E \Gr  \, \mathbf{q}_n :  \Gr ( \vv - \, {\Pi}_{n}^{\Gr}   \vv) \, {\rm d} E = 0 &\qquad  &\text{for all $\mathbf{q}_n \in [\Pk_n(E)]^2$,} \\
& \int_{\partial E}\mathbf{q}_0 : (\vv - \,  {\Pi}_{n}^{\Gr}  \vv) \de= 0 &\qquad  &\text{for all $\mathbf{q}_0 \in [\Pk_0(E)]^2$,} \\
\end{aligned}
\right.
\end{equation}

\item the $\boldsymbol{\epsilon}$\textbf{--projection} ${\Pi}_{n}^{\grepseps} \colon [H^1(E)]^2 \to [\Pk_n(E)]^2$, defined for all $\vv \in [H^1(E)]^2$ by
\begin{equation}
\label{eq:Peps_k^E}
\left\{
\begin{aligned}
& \int_E \epseps   (\mathbf{q}_n) :  \epseps ( \vv - \, {\Pi}_{n}^{\grepseps}   \vv) \, {\rm d} E = 0 &\qquad  &\text{for all $\mathbf{q}_n \in [\Pk_n(E)]^2$,} \\
& \int_{\partial E}\mathbf{q} : (\vv - \,  {\Pi}_{n}^{\grepseps}  \vv) \de= 0 &\qquad  &\text{for all $\mathbf{q}\in \langle [\Pk_0(E)]^2, \x^{\perp}   \rangle $.} \\
\end{aligned}
\right.
\end{equation} 
\end{itemize}

\section{The Virtual Element Approximation}\label{sec:VEM}

 In the present section we summarize a  short overview of the $H^1$--conforming Virtual
 Elements for the mixed problems. 
 We will make use of various tools from the virtual element technology,
 that will be described briefly. 
 We refer the reader to~\cite{BLV:2017,vacca:2018,BLV:2018} for a deeper analysis.
 
 Let $\{\Omega_h\}_h$ be a sequence of decompositions of $\Omega$ into general polygonal elements $E$
with
\[
h := \sup_{E \in \Omega_h} h_E .
\]
For all $h$ we suppose that  each element $E$ in $\Omega_h$ fulfils the following assumptions:
\begin{description}
\item [$\mathbf{(A1)}$] $E$ is star-shaped with respect to a ball $B_E$ of radius $ \geq\, \rho \, h_E$, 
\item [$\mathbf{(A2)}$] the distance between any two vertexes of $E$ is $\geq \rho \, h_E$, 
\end{description}
where $\rho$ is a uniform positive constant. We remark that the hypotheses above, though not too restrictive in many practical cases, 
can be further relaxed, as investigated in ~\cite{BLR-stab, brenner:2018}.

\subsection{Virtual elements spaces}
\label{sub:spaces}  
 Let $k \geq 2$ the polynomial degree of accuracy of the method. We consider  on each element $E \in \Omega_h$ the (enlarged) finite dimensional local virtual space
 \begin{multline*}
 \mathbf{U}_h^E := \biggl\{  
 \vv \in [H^1(E) \cap C^0(E)]^2 \quad \text{s.t} \quad {\vv}_{|_e} \in [\Pk_k(e)]^2 \quad \text{for all $e \in \partial E$} \, , \biggr.
 \\
 \left.
 \biggl\{
 \begin{aligned}
 & - \boldsymbol{\Delta}    \vv  -  \nabla s \in \mathbf{x}^{\perp}\, \Pk_{k-1}(E),  \\
 & {\rm div} \, \vv \in \Pk_{k-1}(E),
 \end{aligned}
 \biggr. \qquad \text{ for some $s \in  L^2_0(E)$}
 \quad \right\}.
 \end{multline*}
 Referring to \cite{vacca:2018} we introduce the Virtual Element space $\VV_h^E$
 as the restriction  of $\mathbf{U}_h^E$ defined by:
 \begin{equation}
 \label{eq:V_h^E}
 \VV_h^E := \left\{ \vv \in \mathbf{U}_h^E \quad \text{s.t.} \quad   \left(\vv - \PN \vv, \, \mathbf{x}^{\perp}\, \widehat{p}_{k-1} \right)_{E} = 0 \quad \text{for all $\widehat{p}_{k-1} \in  \widehat{\Pk}_{k-1 \setminus k-3}(E)$} \right\} \,.
 \end{equation}
 We here summarize the main properties of the virtual
 space $\VV_h^E$  (we refer \cite{vacca:2018,BLV:2018} for a detailed analysis):
 \begin{itemize}
 \item \textbf{dimension:}
  the dimension of $\VV_h^E$  is
  \begin{equation*}
 \begin{split}
 \dim\left( \VV_h^E \right) 
 &= 2n_E \, k + \frac{(k-1)(k-2)}{2}  + \frac{(k+1)k}{2} - 1
 \end{split}
 \end{equation*}
 where $n_E$ is the number of vertexes of $E$;
 \item \textbf{degrees of freedom:}
 Let $\texttt{NDoF} := \dim(\VV_h^E)$, the linear operators 
 $$\mathbf{D_V} := \{ \mathbf{D}_{\mathbf{V}, i} \}_{i=1}^{\texttt{NDoF}} \,,$$ 
 split into four subsets (see Figure \ref{fig:dofsloc}) constitute a set of DoFs for $\VV_h^E$:
 \begin{itemize}
 \item ${\mathbf{D}^{\rm vertex}}$:  the values of $\vv$ at the vertexes of the polygon $E$,
 \item ${\mathbf{D}^{\rm edge}}$: the values of $\vv$ at $k-1$ distinct  internal points of the $(k+1)$-point Gauss--Lobatto rule on  every edge $e \in \partial E$,
 \item $\mathbf{D}^{\mathbf{m}^{\perp}}$: the moments of $\vv$
 \[
\mathbf{D}^{\mathbf{m}^{\perp}}_{i} (\vv)
 :=
\frac{1}{|E|} \int_E \vv \cdot \mathbf{m}^{\perp}\, m_{i} \, {\rm d}E \qquad \text{for  $1 \leq i\leq \pi_{k-3}$,}
 \]
 \item $\mathbf{D^{\rm div}}$: the moments of ${\rm div} \,\vv$ 
 \[
 \mathbf{D}^{\rm div}_{i} (\vv) := 
\frac{h_E}{|E|} \int_E ({\rm div} \,\vv) \, m_{i} \, {\rm d}E \qquad \text{for  $2 \leq i \leq \pi_{k-1}$;}
 \] 
 \end{itemize}
 \item \textbf{projections:}
 referring to \eqref{eq:P0_k^E}, \eqref{eq:Pn_k^E} and \eqref{eq:Peps_k^E}, 
 the DoFs $\mathbf{D_V}$ allow us to compute exactly the polynomial projections 
 \begin{itemize}
 \item $\PN \colon \VV_h^E \to [\Pk_k(E)]^2$ (see Subsection~\ref{sub:piNabla}),
 \item $\Peps \colon \VV_h^E \to [\Pk_k(E)]^2$ (see Subsection~\ref{sub:piEps}),
 \item $\P0 \colon \VV_h^E \to [\Pk_k(E)]^2$ (see Subsection~\ref{sub:piZero}),
 \item $\PP0 \colon \Gr(\VV_h^E) \to [\Pk_{k-1}(E)]^{2 \times 2}$ (see Subsection~\ref{sub:piGrad}),
 \item $\PP0 \colon \epseps(\VV_h^E) \to [\Pk_{k-1}(E)]^{2 \times 2}$ (see Remark~\ref{rem:piEpsGrad}),
 \end{itemize}
 in the sense that, given any $\vv \in \VV_h^E$, we are able to compute the polynomials
 \[
 \PN \vv \,, \qquad \Peps \vv\,, \qquad
 \P0 \vv\,,  \qquad \PP0\Gr\vv\,, \qquad  \PP0\epseps(\vv)
 \] 
 only using, as unique information, the degree of freedom values $\mathbf{D_V}$ of $\vv$.
 \end{itemize}
 \begin{figure}[!h]
 \center{
 \includegraphics[width=0.25\textwidth]{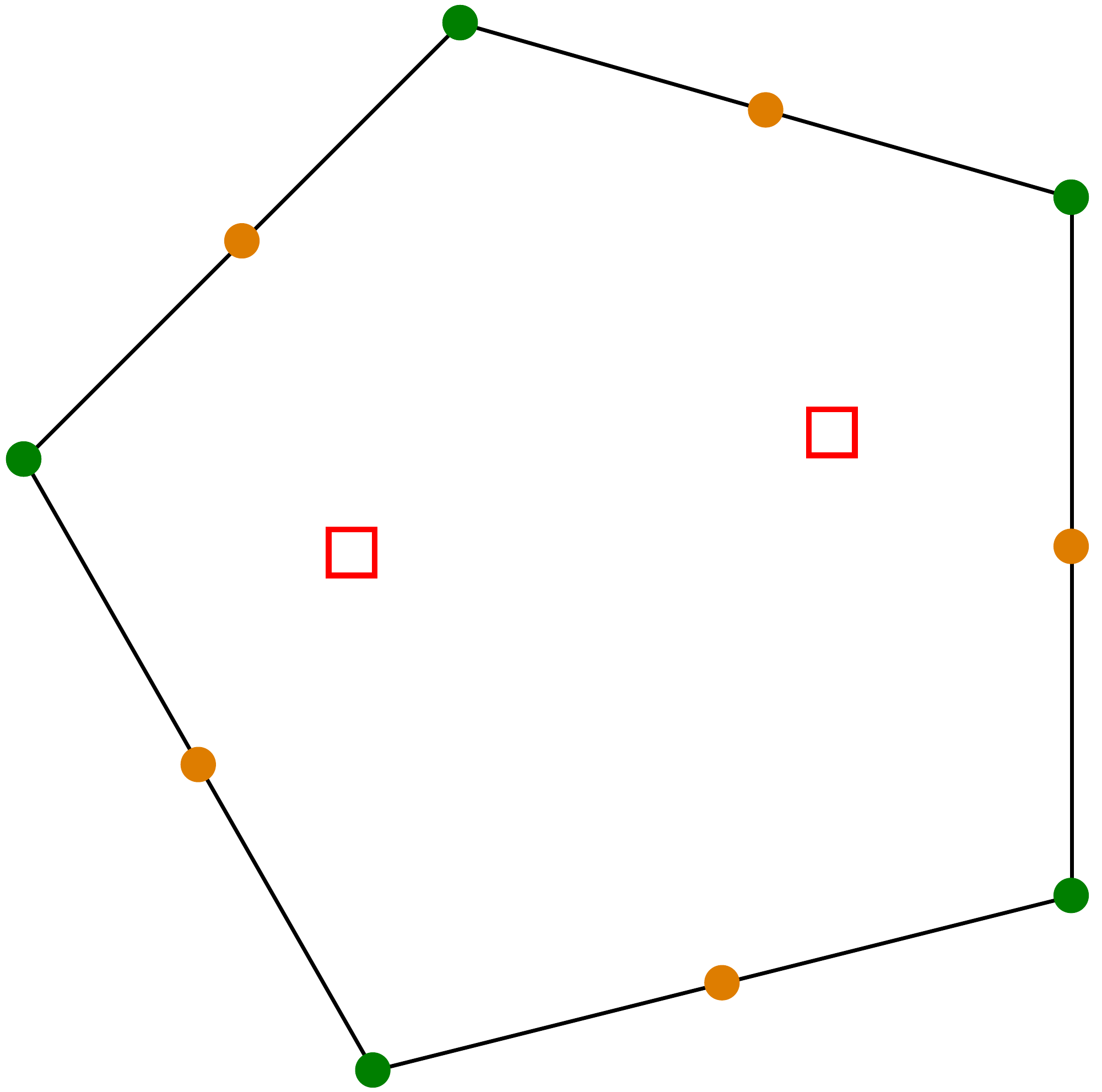} 
 \qquad\qquad\qquad
 \includegraphics[width=0.25\textwidth]{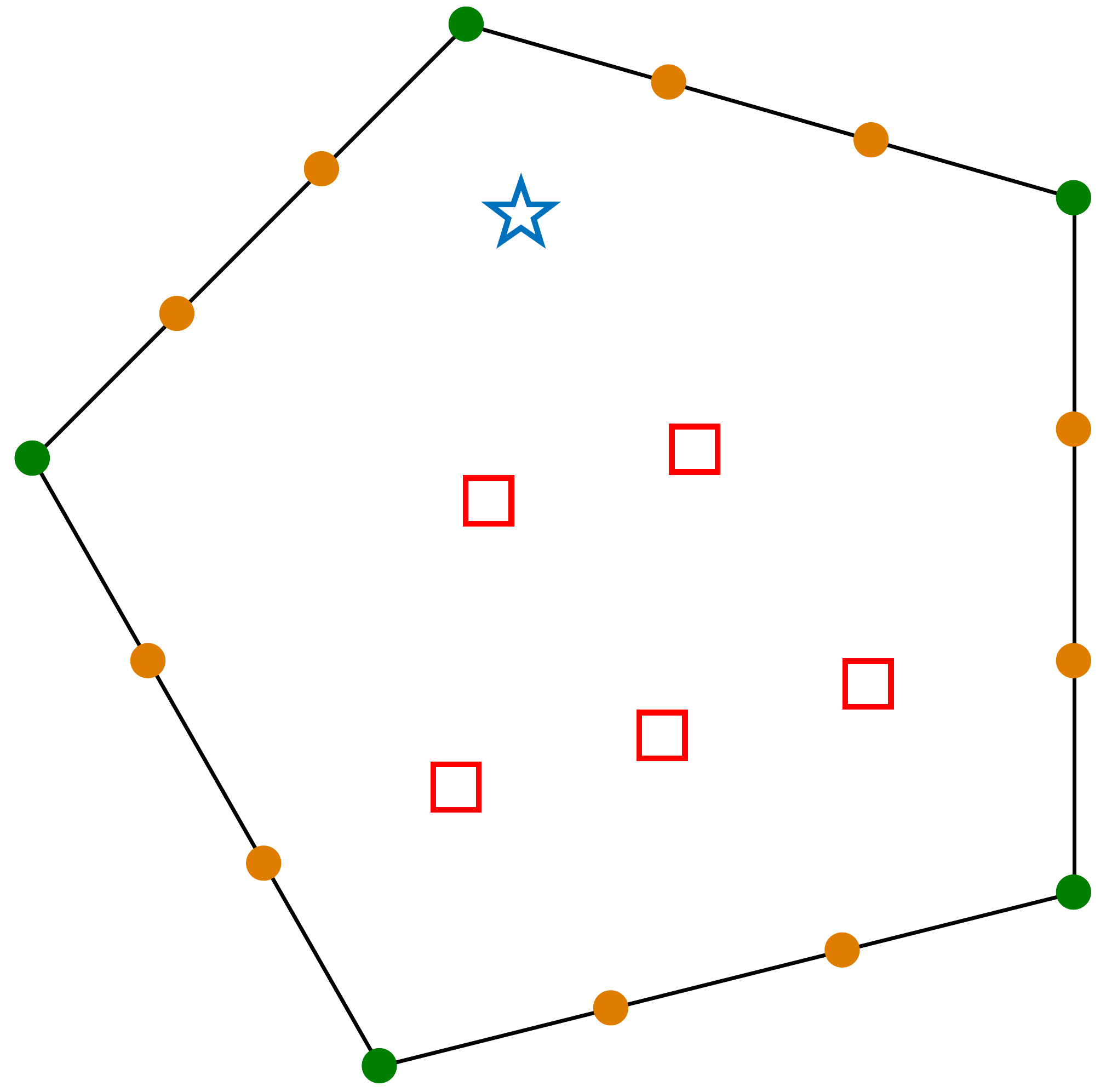}
 \caption{DoFs for $k=2$ (left), $k=3$ (right). We denote $\mathbf{D}^{\rm vertex}$ with green dots, $\mathbf{D}^{\rm edge}$ with orange dots, $\mathbf{D}^{\mathbf{m}^{\perp}}$ with blue stars, $\mathbf{D}^{\rm div}$ with red squares.}
 \label{fig:dofsloc}
 }
 \end{figure}
For future reference, we collect all the $2 k n_E$ boundary DoFs (the first two items above) and
denote them with $\mathbf{D^{ \partial}}
:=\{\mathbf{D}^{\partial}_i\}_{i=1}^{2 k n_E} $. 
Moreover we denote with $\mathbf{D}^e
:=\{\mathbf{D}^{ e}_i\}_{i=1}^{2 k}$ the DoFs of $\mathbf{D^{ \partial}}$
relative to the (closed) edge $e$.
Note that in the case $k=2$ the set of DoFs $\mathbf{D}^{\mathbf{m}^{\perp}}$ is empty.

The basis functions $\fg_i \in \VV_h^E$ are defined as usual as the canonical basis functions
\[
\mathbf{D}_{\mathbf{V}, j} (\fg_i) = \delta_{ji}
\qquad 
\text{for $i, j = 1, \dots, \texttt{NDoF}$,}
\]
in particular for any $\vv \in \VV_h^E$ we have the Lagrange identity
\[
\vv = \sum_{i=1}^{\texttt{NDoF}} \mathbf{D}_{\mathbf{V}, i} (\vv) \, \fg_i \,.
\]

 The global virtual element space is obtained  by combining the local spaces $\VV_h^E$
 accordingly to the local degrees of freedom, as in standard  finite elements and considering the homogeneous boundary conditions. Therefore we get
 \begin{equation}
 \label{eq:V_h}
 \VV_h := \{ \vv \in [H^1_0(\Omega)]^2  \quad \text{s.t} \quad \vv_{|_E} \in \VV_h^E  \quad \text{for all $E \in \Omega_h$} \} \,.
 \end{equation}

 \vspace{2em}
 \noindent The pressure space is simply given by the piecewise polynomial functions
 \begin{equation}
 \label{eq:Q_h}
 Q_h := \{ q \in L_0^2(\Omega) \quad \text{s.t.} \quad q_{|_E} \in
 \Pk_{k-1}(E) \quad \text{for all $E \in \Omega_h$}\} \,,
 \end{equation}
 with  local  DoFs:
 \begin{itemize}
 \item $\mathbf{D_Q}$: the coefficients $\{c_{i}\}_{i=1}^{\pi_{k-1}}$ of $q_{|_E}$ with respect to the re-scaled basis $\mathbb{M}_{k-1}(E)$, i.e.
 \[
 q = \sum_{i=1}^{\pi_{k-1}} c_{i} \, \frac{h_E}{|E|}  m_{i} \,.
 \]

 \end{itemize}

\begin{rem} 
 As observed in \cite{vacca:2018, BLV:2018}, by definitions \eqref{eq:V_h} and \eqref{eq:Q_h}, it holds 
 \begin{equation*}
 {\rm div}\, \VV_h\subseteq Q_h \,.
 \end{equation*}
This entails a series of important advantages: 
the  numerical scheme leads to an exactly
divergence-free discrete velocity solution for incompressible fluid. 
Moreover the proposed family of virtual elements is uniformly stable both for the Darcy and the Stokes problem.
\end{rem} 
 %
 \subsection{Discrete forms and load term approximation}
 \label{sub:fors}

In this subsection we briefly describe the construction of a discrete version of the local stiffness matrices arising from the  mixed problems  such as  Stokes, Darcy and the Navier--Stokes Equations (presented in Section \ref{sec:tests}).
These physical problems share the same algebraic structure once a discretization is introduced. 
In particular we need to define the following stiffness matrices:
\begin{align}
\label{eq:a0}
\left({\bm K}^{0, E}_h \right)_{ij} = 
a_h^{0, E}(\fg_i,\, \fg_j) &\approx \int_E \fg_i \cdot \fg_j \, {\rm d}E
& \qquad 
&\text{for $i, j = 1, \dots, \texttt{NDoF}$,}
\\
\label{eq:aGr}
\left({\bm K}^{\Gr, E}_h \right)_{ij} = 
a_h^{\Gr, E}(\fg_i,\, \fg_j) &\approx \int_E \Gr \fg_i : \Gr \fg_j \, {\rm d}E
& \qquad 
&\text{for $i, j = 1, \dots, \texttt{NDoF}$,}
\\
\label{eq:aeps}
\left({\bm K}^{\grepseps, E}_h \right)_{ij} = 
a_h^{{\grepseps}, E}(\fg_i,\, \fg_j)
&\approx \int_E \epseps (\fg_i) : \epseps (\fg_j) \, {\rm d}E
& \qquad 
&\text{for $i, j = 1, \dots, \texttt{NDoF}$,}
\end{align} 
arising from the Darcy  and the Stokes equation (in the ``gradient form'' or in the ``epsilon form'') respectively.
Following the standard procedure in VEM letterature \cite{volley,BLV:2017,vacca:2018,BLV:2018}, we introduce the computable discrete local bilinear forms:
\begin{align}
\label{eq:ah0}
a_h^{0, E}(\fg_i, \fg_j) &:= 
\int_E \P0 \fg_i \cdot  \P0 \fg_j \,{\rm d}E \, + \,
|E| \, \mathcal{S}\left((I - \P0) \fg_i, \, (I -\P0) \fg_j \right)
\\
\label{eq:ahGr}
a_h^{\Gr, E}(\fg_i, \fg_j) &:= 
\int_E \Gr(\PN \fg_i) :  \Gr(\PN \fg_j) \,{\rm d}E \, + \, 
\mathcal{S}\left((I - \PN) \fg_i, \, (I -\PN) \fg_j \right)
\\
\label{eq:aheps}
a_h^{\grepseps, E}(\fg_i, \fg_j) &:= 
\int_E \epseps(\Peps \fg_i) :  \epseps(\Peps \fg_j) \,{\rm d}E  \, + \,
\mathcal{S}\left((I - \Peps) \fg_i, \, (I -\Peps) \fg_j \right)
\end{align}
where
\begin{equation}
\label{eq:s}
\mathcal{S}(\uu_h, \, \vv_h) := 
\sum_{i=1}^{\texttt{NDoF}} \mathbf{D}_{\mathbf{V}, i}(\uu_h) \, \mathbf{D}_{\mathbf{V}, i}(\vv_h)
\end{equation}
is the inner product of the vectors containing the DoFs values of $\uu_h$ and $\vv_h$ respectively.

Let $a_h^E(\cdot, \, \cdot)$ be one of the discrete bilinear forms listed above.
It is straightforward to check  that the definitions 
\eqref{eq:P0_k^E}, \eqref{eq:Pn_k^E}, \eqref{eq:Peps_k^E} 
and~\eqref{eq:s} imply that the discrete forms $a_h^E(\cdot, \cdot)$
satisfies the consistency and stability properties \cite{volley}.

As usual we define the global approximated bilinear form by adding the local contributions:
 \begin{equation}
 \label{eq:ahglobal}
 a_h(\uu_h, \vv_h) := \sum_{E \in \Omega_h}  a_h^E(\uu_h, \vv_h), \qquad \text{for all $\uu_h, \vv_h \in \VV_h$.}
 \end{equation}

For the treatment of the Navier--Stokes equation we also need to define a computable approximation of the convective trilinear form 
\[
c_h^{E}(\ww_h; \, \uu_h, \vv_h) \approx 
\int_E \left(  \Gr \uu_h \,  \ww_h \right) \cdot \vv_h \, {\rm d}E \,.
\] 
 Referring to \eqref{eq:P0_k^E}  we set
 for all $\ww_h, \uu_h, \vv_h \in \VV_h^E$:
 \begin{align*}
 c_h^E(\ww_h; \, \uu_h, \vv_h) &:= \int_E \left[ \left(\PP0 \, \Gr \uu_h  \right)  \left(\P0 \ww_h \right) \right] \cdot \P0 \vv_h \, {\rm d}E 
 \end{align*}
 and note that all quantities in the previous formula are computable. 
 The global approximated trilinear form is defined by simply
summing the local contributions:
 \begin{equation}
 \label{eq:c_h}
 c_h(\ww_h; \, \uu_h, \vv_h) := \sum_{E \in \Omega_h}  c_h^E(\ww_h; \, \uu_h, \vv_h), \qquad \text{for all $\ww_h, \uu_h, \vv_h \in \VV_h$.}
 \end{equation}
Finally, for any given function $\ff \in [L^2(E)]^2$, we introduce the
computable approximation of the right-hand side
\[
(\ff_h, \, \vv_h)_E \approx (\ff, \, \vv_h)_E  \qquad \text{for all $\vv_h \in \VV_h^E$}
\]
by taking 
\begin{equation}
\label{eq:fh}
{\ff_h}_{|_E} := \P0 \ff \,.
\end{equation}
Therefore for any loads $\ff \in [L^2(E)]^2$ and $g \in L^2(E)$,  the vectors of  the local load terms are simply defined by \cite{vacca:2018,BLV:2018}
 \begin{equation}
 \begin{aligned}
 \label{eq:FG_h}
 \left({\bm f}_h^E \right)_i &:= \int_E  \P0 \ff \cdot  \fg_i \dE 
 &\qquad 
 &\text{for $i=1, \, \dots \,, \texttt{NDoF}$,} \\
 \left({\bm g}^E \right)_{\ell} &:= \frac{h_E}{|E|}\int_E g \, m_{\ell} \dE 
 &\qquad 
 &\text{for ${\ell}=1, \, \dots \,, \pi_{k-1}$.} 
 \end{aligned} 
 \end{equation}

\subsection{Divergence of functions in $\VV_h^E$}
\label{sub:div}

In this subsection we show how to exactly determine the divergence of a function $\vvh\in\VV_h^E$ starting from its degrees of freedom.
First of all we recall from \eqref{eq:V_h^E} that $\dd \, \vvh \in\mathbb{P}_{k-1}(E)$, therefore
we can write this polynomial on the scaled monomial basis $\mathbb{M}_{k-1}(E)$, i.e.
$$
\dd \, \vvh = \sum_{i=1}^{\pi_{k-1}} d_{i} \, \m{i}\,.
$$
To find the unknown coefficients $d_{i}$,
we test $\dd \, \vvh$ against all the elements $m_j$ of $\mathbb{M}_{k-1}(E)$, obtaining 
\begin{equation}
\sum_{i=1}^{\pi_{k-1}} d_{i} \int_E \m{i}\,\m{j}\dE  = 
\int_E (\dd \, \vvh)\,\m{j} \dE\qquad
\text{for  $j=1, \dots, \pi_{k-1}$.}
\label{eqn:divCond}
\end{equation}
All terms in Equation~\eqref{eqn:divCond} are computable although the function $\vvh$ is virtual.
Indeed, the left-hand side is an integral of monomials,
while the right-hand sides are:
\begin{itemize}
 \item if $j \geq 2$ by definition of $\mathbf{D}^{\rm div}$:
 \[
 \int_E (\dd \, \vvh)\,\m{j} \dE = \frac{|E|}{h_E}\, \mathbf{D}^{\rm div}_{j} (\vv_h)
 \]
 \item if $j=1$, by the divergence theorem follows that
 $$
 \int_E (\dd \,  \vvh)\dE = \int_{\partial E} \vvh\cdot\nn\de = 
 \sum_{e\in \partial E}\int_e \vvh\cdot\nn_e\de
 $$
 so it is computable via the DoFs values $\mathbf{D}^e(\vv_h)$ that are exactly the values of $\vv_h$ at the $(k+1)$ Gauss--Lobatto quadrature points on each edge $e \in \partial E$.
\end{itemize}

\begin{rem}
We stress that the algebraic form of Equation~\eqref{eqn:divCond} consists of  a linear system $A{\bm d}=\bb$,
where 
\begin{equation*}
A := \int_E \m{i}\,\m{j}\dE\,,\quad \bb := \int_E (\dd \,\vvh)\,\m{j}\dE\quad
\text{and}\quad {\bm d} := \left(d_{i} \right)_{i=1}^{\pi_{k-1}}\,. 
\end{equation*}
The same algebraic structure: matrix which elements are integrals of polynomials, right hand-side consisting of computable integrals involving the virtual functions, will be at the basis of the computations of all projections  in Section \ref{sec:proj}.
\label{rem:system}
\end{rem}

The argument above give us a recipe to compute exactly the ``divergence form''
involved in the classic velocity-pressure problem, i.e.
\begin{equation}
\label{eq:bform}
b(\vv_h, \, q_h) := \int_{\Omega} (\dd \,\vvh)\,q_h\dE 
\qquad 
\text{for all $\vv_h \in \VV_h$ and $q_h \in Q_h$.}
\end{equation}
Indeed we get
\[
b(\vv_h, \, q_h) =
\sum_{E \in \Omega_h} \int_{E} (\dd \,\vvh)\,q_h \dE  \,
\]
that is explicitly computable.
Moreover, direct computation and the definition of the bases functions for the spaces $\VV_h^E$ and $Q_h^E$, show that the local matrix 
\[
\left({\bm B}^E \right)_{i{\ell}} = \frac{h_E}{|E|}\int_E (\dd \, \fg_i) \, m_{\ell} \dE
\qquad 
\text{for $i=1, \dots, \texttt{NDoF}$ and ${\ell}=1, \dots, \pi_{k-1}$}
\]
has the simple form
\begin{equation}
{\bm B}^E :=
\left[\begin{array}{@{}c|c@{}|c@{}}
  \begin{matrix}
  b_1 &
  \cdots   &
  b_{2 k n_E}
  \end{matrix}
  &
  \begin{matrix}
  0 &
  \cdots   &
  0
  \end{matrix} \, \,
  &
  \begin{matrix}
  0 &
  \cdots   &
  0
  \end{matrix}
  \\
\hline
  \begin{matrix}
   &  & \\ 
   & {\bm 0} & \\
   &   &
  \end{matrix}
  &
  \begin{matrix}
   &  & \\ 
   & {\bm 0} & \\
   &   &
  \end{matrix}
  & 
  \begin{matrix}
   &  & \\ 
   & {\bm I} & \\
   &   &
  \end{matrix}
\end{array}\right]
\label{eq:bmatrix}
\end{equation}
where
\[
b_i = \frac{h_E}{|E|}\int_{\partial E} \fg_i \cdot {\bm n} \de \qquad
\text{for $i=1, \dots , 2 k n_E$.}
\]

\section{How to make projectors}\label{sec:proj}

In this section we focus on the definition of the projection operators described in 
Section~\ref{sec:def}. 
In particular we will exploit how to 
to compute such projections via the degrees of freedom
even if we are dealing with virtual functions.


Let $\Pi_k \colon \VV_h^E \to \Pk_k(E)$ denote one of the projections \eqref{eq:P0_k^E}, \eqref{eq:Pn_k^E} and \eqref{eq:Peps_k^E}.
For a given virtual function $\vv_h \in \VV_h^E$, since by definition $\Pi_k \vvh$ is a vector valued polynomial of degree $k$,
it can be written in terms of the monomial basis $[\mathbb{M}_k(E)]^2$, i.e.
\begin{equation}
\Pi_k \vvh = \sum_{i=1}^{2\pi_k} \zeta_i\,\mB_i\,.
\label{eqn:PiNablaSviluppo}
\end{equation}
Once we find the unknown coefficients $\zeta_i$, 
we uniquely determine such projection.

We further underline that a generic function $\vvh\in\VV_h^E$ is a continuous vectorial polynomial of degree $k$ on each edge in $\partial E$, i.e. ${\vvh}_{|e} \in[\mathbb{P}_k(e)]^2$.
Such polynomial is uniquely determined by $\mathbf{D}^{e}$.
Consequently \emph{each time} we are considering a virtual function $\vvh$ on $\partial E$, 
it has to be considered as a known function.

\begin{rem}
We stress  that the choice of the Gauss--Lobatto  quadrature points on each edge as DoFs
is particularly convenient:  we can compute the
integral of a polynomial of degree $2k-1$
on each edge $e$ directly from its $k+1$ degrees of freedom $\mathbf{D}^e$, and this feature will greatly simplify the  implementation of the method.
\label{rem:boundary1}
\end{rem}

\subsection{$\Gr$--projection $\PN$}\label{sub:piNabla}

Let us start our analysis with the $\Gr$-projection $\PN$ (cf. Definition \eqref{eq:Pn_k^E}).
The target is to determine the coefficients  $\zeta_i$ in \eqref{eqn:PiNablaSviluppo} relative to $\PN$.
To achieve this goal, we replace the generic vectorial polynomial $\pp_k$ 
in Equation~\eqref{eq:Pn_k^E} by monomials in $[\mathbb{M}_k(E)]^2$, obtaining the equivalent system
\begin{equation}
\left\{
\begin{array}{rlll}
\mathlarger{\int_E} \Gr (\vvh - \PiNabla \vvh) \, : \, \Gr\mB_j\dE &=& 0 &
\text{for all $\mB_j\in [\mathbb{M}_k(E)]^2 \setminus [\mathbb{M}_0(E)]^2$,}\\
\mathlarger{\int_{\partial E}} (\vvh - \PiNabla \vvh)\cdot \mB_j\de &=& 0 &
\text{for all $\mB_j\in [\mathbb{M}_0(E)]^2$.}
\end{array}
\right.
\label{eqn:piNablaMono}
\end{equation}
It is easy to show that Equation~\eqref{eqn:piNablaMono} is a set of linearly independent equations which uniquely determine the coefficients $\zeta_i$.
Moreover, Equation~\eqref{eqn:piNablaMono} can be seen 
as a linear system in the unknowns $\zeta_i$ 
in a similar way we have done in Remark \ref{rem:system}
to find the divergence of $\vvh$.
To find such polynomial coefficients,
we have to understand if it is possible to compute the quantities where the virtual function $\vvh$ appears.
Since the second condition in Equation~\eqref{eqn:piNablaMono}
involves integral over the boundary of the polygon,
it is clear that is computable.
Let us consider the first one.
Substituting the definition in Equation~\eqref{eqn:PiNablaSviluppo} and 
moving the virtual function on the right-hand side, we get
$$
\sum_{i=1}^{2\pi_k} \zeta_i\,
\int_E \Gr \mB_i\, : \,\Gr\mB_j\dE  = 
\int_E \Gr \vvh \, : \,\Gr\mB_j\dE\,.
$$
The left-hand side is computable 
since it involves integral of vectorial monomials over the polygon $E$. 
We have to show that the right-hand side is also computable 
via the degrees of freedom of $\vvh$. 
Integrating by parts and we obtain
\begin{eqnarray}
\int_E \Gr \vvh \, : \,\Gr\mB_j\dE &=&
- \int_E \vvh\cdot\dl\mB_j \dE +
\int_{\partial E} \vvh\cdot(\Gr\mB_j\,\nn)\de\nonumber\\
&=& - \int_E \vvh\cdot\dl\mB_j \dE +
\sum_{e\in\partial E}\int_e \vvh\cdot(\Gr\mB_j\,\nn_e)\de\,.
\nonumber\\
\label{eqn:firstPart}
\end{eqnarray}
As we already said in the introduction of this section,
the virtual function $\vvh$ is a known vectorial polynomial on the boundary so 
integrals over edges are computable, in particular we integrate a polynomial of degree $2k-1$ on $e$ (see Reamrk \ref{rem:boundary1}).
Let us focus on the term inside the polygon $E$.
We observe that $\dl\mB_j$ is a vectorial monomial 
then the following relation holds
$$
\dl\mB_j = 
\frac{a_1}{h_E^2}\,\mVect{\aalpha_1}{\emptyset} + 
\frac{a_2}{h_E^2}\,\mVect{\aalpha_2}{\emptyset} + 
\frac{a_3}{h_E^2}\,\mVect{\emptyset}{\aalpha_3} + 
\frac{a_4}{h_E^2}\,\mVect{\emptyset}{\aalpha_4}\,.
$$
Finding the coefficients $a_s$ and the multi-indexes $\aalpha_s$ is easy 
since we are dealing with scaled monomials.
We substitute such expression in the first integral of Equation~\eqref{eqn:firstPart}
and we get
\begin{equation}
\label{eqn:stepWithABPN}
\int_E \vvh\cdot\left(\frac{a_1}{h_E^2}\,\mVect{\aalpha_1}{\emptyset} + 
\frac{a_2}{h_E^2}\,\mVect{\aalpha_2}{\emptyset} + 
\frac{a_3}{h_E^2}\,\mVect{\emptyset}{\aalpha_3} + 
\frac{a_4}{h_E^2}\,\mVect{\emptyset}{\aalpha_4}\right)\dE\,.
\end{equation}
Let us consider one of these integrals, 
similar considerations can be done for the other terms.
We exploit Proposition~\ref{prop:decompMono} and we integrate by parts
\begin{eqnarray}
 \int_E \vvh\cdot\mVect{\aalpha_1}{\emptyset}\dE &=& 
\int_E \vvh\cdot
(b_1\nabla m_{\bbeta_1} + 
b_2\,\mPerp \, m_{\bbeta_2})\dE \nonumber\\ &=&
b_1\int_E \vvh\cdot\nabla m_{\bbeta_1}\dE
+ b_2\int_E \vvh\cdot \mPerp m_{\bbeta_2}\dE\,,\nonumber\\
\label{eqn:step1}
\end{eqnarray}
where we have defined the scalar coefficients $b_s$ and 
the multi-indexes $\bbeta_s$ from Proposition~\ref{prop:decompMono}.
According to such proposition $-1 \leq |\bbeta_2| \leq k-3$ so 
the last integral is the degree of freedom $\mathbf{D}^{\mathbf{m}^{\perp}}$
and we can compute it.
The first integral requires additional steps:
\begin{eqnarray}
\int_E \vvh\cdot\nabla m_{\bbeta_1}\dE &=& 
- \int_E (\dd \,\vvh)\,m_{\bbeta_1} \dE
+  \int_{\partial E} (\vvh\cdot\nn)\,m_{\bbeta_1}\de\nonumber\\ &=&
- \int_E (\dd \, \vvh)\,m_{\bbeta_1} \dE
+  \sum_{e\in\partial E}\int_{e} (\vvh\cdot\nn_e)\,m_{\bbeta_1}\de\,.\nonumber\\
\label{eqn:step2}
\end{eqnarray}
The integral inside the element is $\mathbf{D^{\rm div}}$ degree of freedom
since $1 \leq |\bbeta_1| \leq k-1$.
The integral over the boundary is computable 
since the virtual function $\vvh$ is a vectorial polynomial on each edge $e$ (we are integrating a polynomial of degree $2k-1$, that is computable by Remark \ref{rem:boundary1}).

The explicit computation of the local stiffness matrix ${\bm K}^{\Gr, E}_h$ (cf. \eqref{eq:aGr}) now follows the guidelines given in \cite{autostoppisti}.

\subsection{$\grepseps$--projection $\Peps$}\label{sub:piEps}

In this subsection we consider the $\grepseps$-projection, $\Peps$ (cf. Definition \eqref{eq:Peps_k^E}).
We proceed in a similar way to the $\PN$.
Given a virtual function $\vvh\in\VV_h^E$, 
its projection is a vector valued polynomial of degree $k$.
We write it in terms of the monomial basis $[\mathbb{M}_k(E)]^2$, 
(cf. \eqref{eqn:PiNablaSviluppo}, considering the projection $\Peps$), 
and we also re-write Definition~\eqref{eq:Peps_k^E} in terms of scaled monomials, obtaining
\begin{equation}
\left\{
\begin{array}{rlll}
\mathlarger{\int_E} \epseps (\vvh - \Peps \vvh) \, : \, \epseps(\mB_j)\dE &=& 0 &
\text{for all $\mB_j\in [\mathbb{M}_k(E)]^2 \setminus \mathbb{K}^{\epseps}(E)$,}
\\
\mathlarger{\int_{\partial E}} (\vvh - \Peps \vvh)\cdot\mB_j \de &=& 0 &
\text{for all $\mB_j\in  \mathbb{K}^{\epseps}(E)$,}
\end{array}
\right.
\label{eqn:piEpsilonMono}
\end{equation}
$$
\mathbb{K}^{\epseps}(E) := 
\left\{
\left(\begin{array}{c} 1 \\ 0 \end{array}\right)\,,
\left(\begin{array}{c} 0 \\ 1 \end{array}\right)\,,
\mPerp
\right\}\,.
$$
It is easy to prove that 
Equation~\eqref{eqn:piEpsilonMono} defines a set of linearly independent conditions 
which uniquely determine the unknown coefficients $\zeta_i$.
Also in this case, Equation~\eqref{eqn:piEpsilonMono} is in the algebraic form detailed in Remark \ref{rem:system}.
Consequently, to find the projection we have to directly solve a linear system.

\begin{rem}
The set $[\mathbb{M}_k(E)]^2 \setminus \mathbb{K}^{\epseps}(E)$ contains the set of scaled monomials which does not belong to the kernel of the operator $\epseps$  (the so-called rigid body motions)
so that such conditions do not become trivial (0=0).
\end{rem}

\noindent 
We analyse only the first conditions in Equation~\eqref{eqn:piEpsilonMono}, for the boundary ones we recall that the virtual functions are explicitly known on $\partial E$ and the boundary integrals are exactly computable in the sense of Remark \ref{rem:boundary1}. 
Therefore we have the linear system 
$$
\sum_{i=0}^{2\pi_k} \zeta_i\,\int_E \epsilon(\mB_i)\, : \, \epsilon(\mB_j)\dE  = 
\int_E \epsilon(\vvh) \, : \, \epsilon(\mB_j)\dE\,.
$$
The left-hand side is computable since it involves only vectorial scaled monomials.
We have to verify if the right-hand side 
is computable from the DoFs values of the virtual function~$\vvh$.
Simple integration by parts and yields 
\begin{eqnarray*}
\int_E \epseps(\vvh) \, : \, \epseps(\mB_j)\dE &=& 
- \int_E \vvh \cdot \ddVect(\epseps(\mB_j))\dE + 
\int_{\partial E} \vvh \cdot (\epseps(\mB_j)\,\nn)\de \\ &=&
- \int_E \vvh \cdot \ddVect(\epseps(\mB_j))\dE + 
\sum_{e\in \partial E} \int_e \vvh \cdot (\epseps(\mB_j)\,\nn_e)\de 
\end{eqnarray*}
As usual, the boundary term is computable (we integrate on each edge $e$ a polynomial of degree $2k-1$).
Concerning the element integral, notice that $\ddVect(\epseps(\mB_j))$  can be written as 
$$
\ddVect(\epseps(\mB_j)) = 
\frac{a_1}{h_E^2}\,\mVect{\aalpha_1}{\emptyset} + 
\frac{a_2}{h_E^2}\,\mVect{\aalpha_2}{\emptyset} + 
\frac{a_3}{h_E^2}\,\mVect{\emptyset}{\aalpha_3} + 
\frac{a_4}{h_E^2}\,\mVect{\emptyset}{\aalpha_4}\,.
$$
Now we can proceed as before, see Equations~\eqref{eqn:step1} and~\eqref{eqn:step2}.
Again the local stiffness matrix ${\bm K}^{\grepseps, E}_h$ (cf. \eqref{eq:aeps}) is computed following in a rather slavish way the reference \cite{autostoppisti}.

\subsection{$L^2$--projection $\PiZero$ projection}\label{sub:piZero}

In this subsection we verify the computability of the $L^2$-projection operator $\PiZero$ (cf. Definition \eqref{eq:P0_k^E}). 
In particular we will exploit the so-called enhanced property of the virtual space \eqref{eq:V_h^E}.
As we have done for the previous polynomial projections, 
we look for the unknown coefficients from the relations
in Definition~\eqref{eq:P0_k^E} written in terms of vectorial scaled monomials
\begin{equation}
\int_E (\vvh - \PiZero \vvh) \cdot \mB_j \dE = 0\qquad\text{for all $\mB_j\in[\mathbb{M}_k(E)]^2$.} \\
\label{eqn:piZeroMono}
\end{equation}
It is easy to show that such conditions are sufficient to find the unknown polynomial coefficients $\zeta_i$ (cf. \eqref{eqn:PiNablaSviluppo} with respect to $\PiZero$).
We proceed as before and we put in Equation~\eqref{eqn:piNablaMono} the polynomial $\PiZero \vvh$ 
written in terms of vectorial scaled monomials, i.e. 
$$
\sum_{i=1}^{2\pi_k} \zeta_{i}\,\int_E \mB_{i}\cdot\mB_{j}\dE  = \int_E \vvh\cdot\mB_{j}\dE\,.
$$
The left-hand side is computable, while the right-hand side involves the virtual function $\vvh$
so we have to verify if it is computable via the degrees of freedom of $\vvh$.
Fist we exploit Proposition~\ref{prop:decompMono} and we get 
\begin{eqnarray*}
\int_E \vvh\cdot\mB_j\dE &=& \int_E \vvh\cdot\left(b_1\,\nabla\m{\bbeta_1} 
+ b_2\,\mPerp\m{\bbeta_2}\right)\dE\nonumber\\
&=& b_1\underbrace{\int_E \vvh\cdot\nabla\m{\bbeta_1}\dE}_{(a)} +\,
b_2\underbrace{\int_E \vvh\cdot\mPerp\,\m{\bbeta_2}\dE}_{(b)}\,,\nonumber\\
\end{eqnarray*}

\noindent Let us consider these two terms separately.
\begin{enumerate}[(a)]
 \item\label{enum:caseA} In the first one we integrate by parts and we get 
\begin{eqnarray*}
\int_E \vvh\cdot\nabla\m{\bbeta_1}\dE &=& -\int_E (\dd \, \vvh)\,\m{\bbeta_1}\dE + 
\int_{\partial E} (\vvh\cdot\nn)\,\m{\bbeta_1}\de \\ &=&
-\int_E (\dd \,\vvh)\,\m{\bbeta_1}\dE + 
\sum_{e\in\partial E}\int_e (\vvh\cdot\nn_e)\,\m{\bbeta_1}\de\,.
\end{eqnarray*}
Such integrals are computable. 
The way of computing the fist integral depends on the degree of $\m{\bbeta_1}$.
If $|\bbeta_1|\leq k-1$, it is a degrees of freedom, $\mathbf{D^{\rm div}}$.
In all the other cases, $k \leq |\bbeta_1|\leq k+1$,
we already prove that we can find the exact expression of $\dd \, \vvh$, see Subsection~\ref{sub:div},
so, since $\dd \, \vvh \in\mathbb{P}_{k-1}(E)$, we compute such integral exactly.
Regarding the boundary term, we recall that  $\vvh$ is a known vectorial polynomial on each edge $e$. However, contrary to the previous cases, we are integrating a polynomial of degree $2k+1$, therefore the $(k+1)$ Gauss--Lobatto values (i.e. the DoFs $\mathbf{D^e}$) are not sufficient to compute exactly this integral.
In such case we need to reconstruct the polynomial  $\vvh$ on each edge $e$ and  then employ a quadrature rule of degree $2k+1$.

\item\label{enum:caseB} Also this integral depends on the degree of the monomial $\m{\bbeta_2}$.
Indeed, if $|\bbeta_2|\leq k-3$, it is a degrees of freedom $\mathbf{D}^{\mathbf{m}^{\perp}}$.
Otherwise, when $k-2 \leq |\bbeta_2|\leq k-1$,
we have to exploit the enhancing condition and 
compute such integral via the projection operator $\PN$, i.e.
$$
\int_E \vvh\cdot\mPerp\,\m{\bbeta_2}\dE = \int_E \PN \vvh\cdot\mPerp\,\m{\bbeta_2}\dE\qquad
k-2 \leq|\bbeta_2|\leq k-1\,.
$$
\end{enumerate}
Now the  the local matrix ${\bm K}^{0, E}_h$ (cf. \eqref{eq:a0}) is built using the guide \cite{autostoppisti}.

\subsection{$\PiGrad \Gr$ projection}\label{sub:piGrad}

In this subsection we verify the computability of the $L^2$-projection operator $\PiGrad \colon \Gr(\VV_h^E) \to [\Pk_{k-1}(E)]^{2 \times 2}$. 
We consider the basis $[\mathbb{M}_{k-1}(E)]^{2\times 2}$ and 
we also write the projection $\PiGrad \Gr \vvh$ in terms of such basis functions:
$$
\PiGrad \Gr \vvh = \sum_{i=1}^{4 \pi_{k-1}} \zeta_i \, \mMatB_{i} \,.
$$
Then, starting from the matrix counterpart of conditions~\eqref{eq:P0_k^E},
we get these set of equations 
\begin{equation}
\sum_{i=1}^{4\pi_{k-1}} \zeta_i \int_E \mMatB_{i}\,:\,\mMatB_{j}\dE = 
\int_E \Gr\vvh\,:\,\mMatB_{j}\dE\quad \text{for  $j=1, \, \dots \,,4\pi_{k-1}$.}
\label{eqn:conForGradProj}
\end{equation}
As for all the other projection operators,
Equation~\eqref{eqn:conForGradProj} defines a linear system 
whose unknowns are the coefficients $\zeta_i$ of $\PiGrad \nabla \vvh$. 

The left-hand side of Equation~\eqref{eqn:conForGradProj} is computable 
since it involves only matrix scaled monomials.
Regarding the right-hand side we proceed as for the other cases.
Integration by parts yields
\begin{eqnarray*}
\int_E \Gr\vvh\,:\,\mMatB_j\dE &=& -\int_E \vvh\cdot (\ddVect \, \mMatB_j)\dE +
\int_{\partial E} \vvh\cdot(\mMatB_j\,\nn)\de \\ &=& 
-\int_E \vvh\cdot(\ddVect \, \mMatB_j)\dE + 
\sum_{e\in\partial E} \int_{e} \vvh\cdot(\mMatB_j\,\nn_e)\de\,.
\end{eqnarray*}
The integral over the edges $e$ is computable since the virtual function $\vvh$ is a vectorial polynomial on the edges (in particular we ingrate along the edge $e$ a polynomial of degree $2k-1$, see Remark \ref{rem:boundary1}).
To show the computability of the internal integral,
we observe that 
$$
\ddVect \,\mMatB_j = 
\frac{a_1}{h_E}\,\mVect{\aalpha_1}{\emptyset} + 
\frac{a_2}{h_E}\,\mVect{\aalpha_2}{\emptyset} + 
\frac{a_3}{h_E}\,\mVect{\emptyset}{\aalpha_3} + 
\frac{a_4}{h_E}\,\mVect{\emptyset}{\aalpha_4}\,,
$$
where, as before, the coefficients $a_s$ and the multi-indexes $\aalpha_s$ can be easily found since we are dealing with scaled monomials.
Consequently, the computability of $\PiGrad$ follows from same arguments of $\PN$, 
see Equation~\eqref{eqn:stepWithABPN}.

\begin{rem}
The computation of $\PP0\epseps(\vv)$ follows the same strategy of $\PP0\Gr\vv$ 
so we omit the explicit construction of such projection operator.
\label{rem:piEpsGrad}
\end{rem}

\section{Numerical Results}
\label{sec:tests}

We present three numerical experiments to exploit the behaviour of the proposed virtual elements family 
for the mixed problems such as Stokes, Darcy, Brinkman and Navier--Stokes equations.
More specifically we assess the actual performance of the virtual element method for high-order polynomial degrees (up to $k=6$).

Given $\ff \in [L^2(\Omega)]^2$ and $g \in L^2(\Omega)$, and
referring to \eqref{eq:V_h}, \eqref{eq:Q_h}, \eqref{eq:ahglobal}, \eqref{eq:bform}, \eqref{eq:fh},
the virtual elements approximation of the general mixed problem has the form  
\begin{equation}
\label{eq:virtual mixed}
\left\{
\begin{aligned}
& \text{find $(\uu_h,\, p_h) \in \VV_h \times Q_h$ s.t.}
\\
& a_h(\uu_h, \, \vv_h) + b(\vv_h, \, p_h) = (\ff_h, \, \vv_h) 
&\qquad &\text{for all $\vv_h \in \VV_h$,}
\\
& b(\uu_h, \, q_h) = (g, \, q_h) 
&\qquad &\text{for all $q_h \in Q_h$.}
\end{aligned}
\right.
\end{equation}
We now explore the algebraic structure of the mixed problem above.
Consider a generic polygonal mesh $\Omega_h := \{ E_i\}_{i=1}^{n_P}$ of $\Omega$.
We denote with ${\bm K}_h$ 
the global counterpart of
one of the local stiffness matrices in \eqref{eq:a0}, \eqref{eq:aGr} and \eqref{eq:aeps},
and with ${\bm B}$ (cf. \eqref{eq:bmatrix}), ${\bm f}_h$, ${\bm g}$ (cf. \eqref{eq:FG_h}) the global version of the div-matrix and discrete right-hand side respectively. 
Denoting with $\texttt{GNDoF}$ the total amount of global velocity DoFs,
the velocity and pressure solutions can be expressed in terms of the global bases
\[
\uu_h = \sum_{i=1}^{\texttt{GNDoF}} \chi_i \, \fg_i 
\qquad
\text{and}
\qquad
p_h = \sum_{j=1}^{n_P} \frac{h_{E_j}}{|E_j|} \sum_{{\ell}=1}^{\pi_{k-1}} \rho_{\ell}^j \, m_{\ell}
\]
and the vectors
\[
{\bm \chi} := \left( \chi_i \right)_i 
\qquad 
\text{and}
\qquad
{\bm \rho} := \left[\begin{array}{@{}c|c@{}|c@{}}
{\bm \rho}^1 & \dots & {\bm \rho}^{n_P} 
\end{array} \right]
\]
are the unknowns of the associated discrete problem.
The discretization of the mixed problem results in a linear algebraic system of the form 
\begin{equation}
\label{eq:algebric}
\left[\begin{array}{@{}c|c@{}|c@{}}
  \begin{matrix}
   &  &  
   \\
   & {\bm K}_h  &
   \\
   &  &
  \end{matrix}
  &
  {\bm 0} \, \,
  &
  \begin{matrix}
   &  &  
   \\
   & {\bm B}^{\rm T}  &
   \\
   &  &
  \end{matrix}
  \\
  \hline
  &&\\[-1em]
  {\bm 0}
  &
  0 \, \,
  &
  {\bm \sigma}^{\rm T}
  \\
  \hline
  \begin{matrix}
   &  &  
   \\
   & {\bm B}  &
   \\
   &  &
  \end{matrix}
  &
  {\bm \sigma} \, \,
  &
  \begin{matrix}
   &  &  
   \\
   & {\bm 0}  &
   \\
   &  &
  \end{matrix}
\end{array} \right]
\, \,
\left[\begin{array}{@{}c}
  \begin{matrix}
  \\
  \, \, {\bm \chi} \\
  \\ 
  \end{matrix}	
  \\
\hline
  \\[-1em]
  \, \, \, \lambda
  \\
\hline  
  \begin{matrix}
  \\
  \, \, {\bm \rho} \\
  \\
  \end{matrix} 
\end{array}\right]
\, = 
\,
\left[\begin{array}{@{}c}
  \begin{matrix}
  \\
  \, \, {\bm f}_h \\
  \\ 
  \end{matrix}	
  \\
\hline
  \\[-1em]
  \, \, 0
  \\
\hline  
  \begin{matrix}
  \\
  \, \,{\bm g} \\
  \\
  \end{matrix} 
\end{array}\right]
\end{equation}
where
\[
{\bm \sigma} := 
\left[\begin{array}{@{}c|c@{}|c@{}}
{\bm \sigma}^1 & \dots & {\bm \sigma}^{n_P} 
\end{array} \right]
\]
with
\[
\left({\bm \sigma}^j \right)_{\ell} = \frac{h_{E_j}}{|E_j|} \int_{E_j} m_{\ell} \, {\rm d}E_j
\qquad
\text{for $j=1, \, \dots\, , n_P$ and ${\ell}=1, \, \dots \, , \pi_{k-1}$.} 
\]
Note that the second block-row in \eqref{eq:algebric} represents the ``zero averaged pressure'' constraint and $\lambda$  is the associated Lagrange multiplier.

Concerning the order of accuracy of the method, 
let $(\uu,\,p)$ and $(\uuh,\,\p_h)$ be
the continuous and its corresponding discrete solution given by \eqref{eq:virtual mixed}.
Then, the expected rate of convergence for the errors are 
\begin{itemize}
\item \textbf{$H^1$--velocity error}:
\[
\|\Gr \uu - \Gr\uuh\|_{L^2(\Omega)} \lesssim h^k \, |\uu|_{k+1} + h^{k+2} \, |\ff|_{k+1} \,,   
\]
\item \textbf{$L^2$--velocity error}:
\[
\| \uu - \uuh\|_{L^2(\Omega)} \lesssim h^{k+1} \, |\uu|_{k+1} + h^{k+3} \, |\ff|_{k+1} \,,
\]
\item \textbf{$L^2$--pressure error}:
\[
\| \p - \ph\|_{L^2(\Omega)} \lesssim h^{k} \, |\uu|_{k+1}  + h^k \, |\p|_{k} + h^{k+2} \, |\ff|_{k+1}  \,. 
\]
\end{itemize}
Since the VEM velocity solution $\uuh$ is not explicitly known point-wise inside the elements,
in order to check the actual performance of the method,
we compute the errors comparing $\uu$ with a suitable polynomial projection of the discrete solution
$\uuh$.
Note that the pressure variable $\ph$ is a piecewise polynomial so 
we can use it directly to compute the pressure error.
Therefore we consider the following computable error quantities:
\begin{align*}
& \text{\texttt{error}}(\uu, H^1) := \sqrt{\sum_{E\in\Omega_h}\|\Gr \uu- \Gr \PN\uuh\|_{L^2(E)}^2}\,,
 \\
& \text{\texttt{error}}(\uu, L^2) := \sqrt{\sum_{E\in\Omega_h}\|\uu-\PiZero\uuh\|_{L^2(E)}^2}\,,
 \\
& \text{\texttt{error}}(\p, L^2) := \|\p-\ph\|_{L^2(\Omega)}\,.
\end{align*}


%

In the experiments we
consider three sequences of finer meshes of the unit square $[0,\,1]^2$,
see Figure~\ref{fig:meshes}:
\begin{itemize}
 \item \texttt{quad}, a mesh composed by structured squares;
 \item \texttt{hexa}, a mesh composed by distorted hexagons;
 \item \texttt{voro}, a Voronoi tessellation composed by general polygons.
\end{itemize}
We associate with each discretization a mesh-size
$$
h:= \frac{1}{n_P}\sum_{i=1}^{n_P} h_{E_i}\,.
$$

\begin{figure}[!htb]
\begin{center}
\begin{tabular}{ccc}
\includegraphics[width=0.3\textwidth]{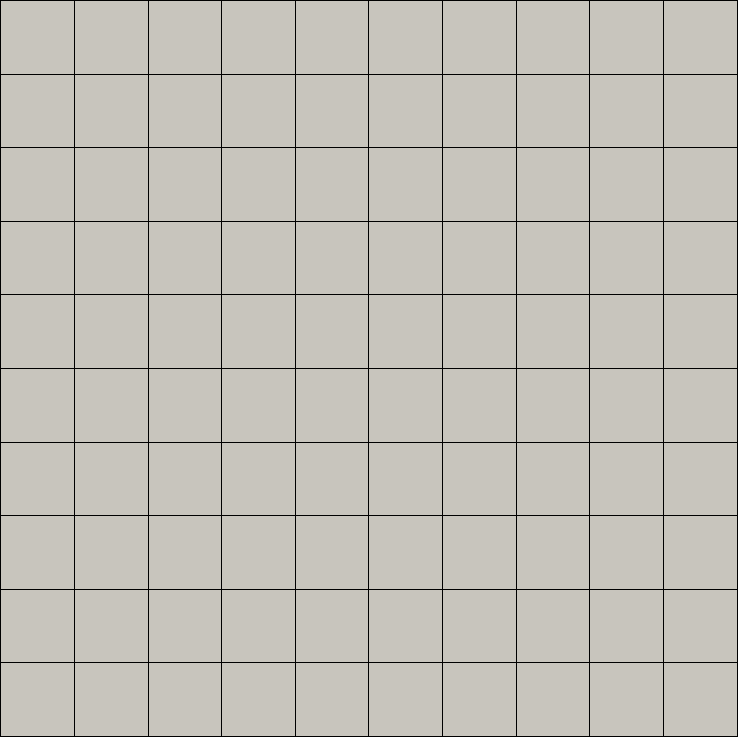}&
\includegraphics[width=0.3\textwidth]{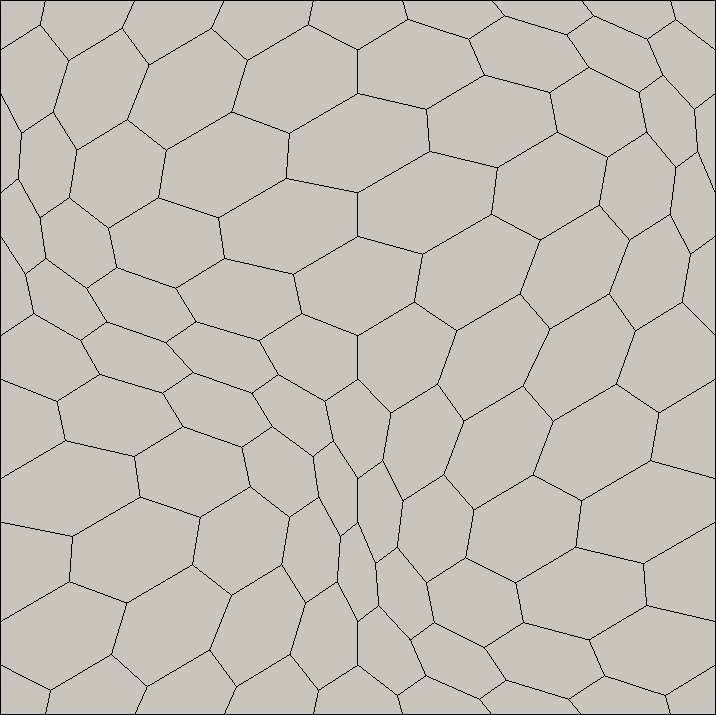}&
\includegraphics[width=0.3\textwidth]{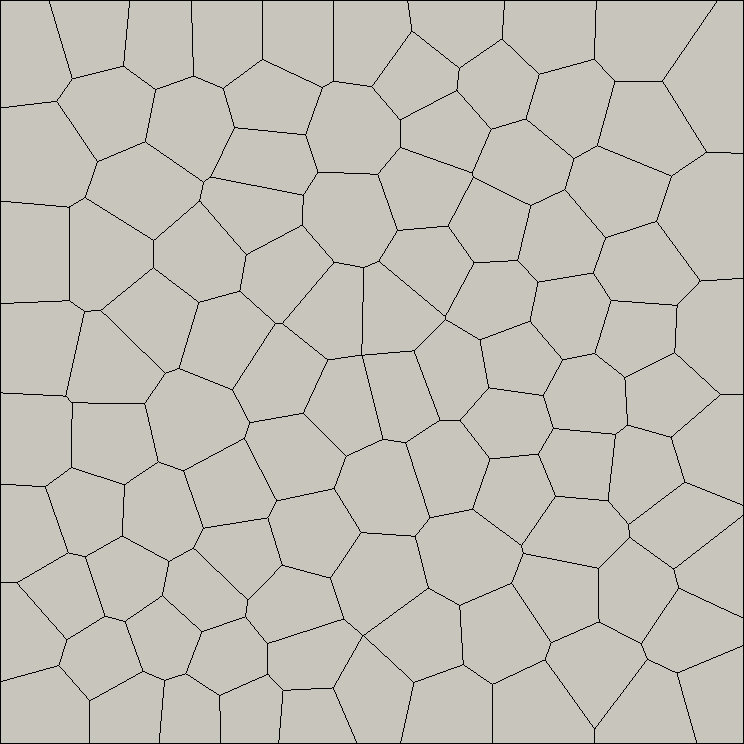}\\
\texttt{quad}&
\texttt{voro}&
\texttt{hexa}\\
\end{tabular}
\end{center}
\caption{An example of one of the meshes used to make the convergence analysis}
\label{fig:meshes}
\end{figure}

\subsection{Stokes Equations}\label{sub:stokes}

In this test we solve the virtual Stokes problem (in the ``epsilon form'')
\begin{equation}
\left\{
\begin{aligned}
& \text{find $(\uu_h,\, p_h) \in \VV_h \times Q_h$ s.t.}
\\
& a_h^{\grepseps}(\uu_h, \, \vv_h) + b(\vv_h, \, p_h) = (\ff_h, \, \vv_h) 
&\qquad &\text{for all $\vv_h \in \VV_h$,}
\\
& b(\uu_h, \, q_h) = 0
&\qquad &\text{for all $q_h \in Q_h$.}
\end{aligned}
\right.
\label{eqn:stokesProb}
\end{equation}
where the bilinear operators $a^{\grepseps}_h(\cdot,\cdot)$ and $b(\cdot,\cdot)$ are 
defined in Equations~\eqref{eq:aheps} and~\eqref{eq:bform}, respectively,
while the discrete load is defined in \eqref{eq:fh}.
The load term $\ff$ and the boundary conditions of the continuous formulation of 
Problem~\eqref{eqn:stokesProb} are chosen in such a way that the exact solution is 
$$
\uu(x,\,y) = \left(
\begin{array}{r}
 0.5\,\sin(2\pi x)\,\sin(2\pi x)\,\sin(2\pi y)\,\cos(2\pi y)\\
-0.5\,\sin(2\pi y)\,\sin(2\pi y)\,\sin(2\pi x)\,\cos(2\pi x)
\end{array}
\right) \,,
\qquad 
\p(x,\,y) = \sin(2\pi x)\,\cos(2\pi y)\,.
$$

In Figures~\ref{fig:quadStokes},~\ref{fig:hexaStokes} and~\ref{fig:voroStokes}
we show the convergence lines with different VEM approximation degrees (up to $k=6$) for the sequences of meshes listed above.
The trend of the error is the expected one and 
it is astonishingly stable with high approximation degrees $k$.
Indeed, the error $\text{\texttt{error}}(\uu, H^1)$ reaches small values, 
close to the machine precision, for $k=6$ and the finest mesh.
Here we do not show the results for $\text{\texttt{error}}(\uu, L^2)$ since they have the expected trend too.

\newcommand\graphsize{0.48}

\begin{figure}[!htb]
\begin{center}
\begin{tabular}{cc}
\includegraphics[width=\graphsize\textwidth]{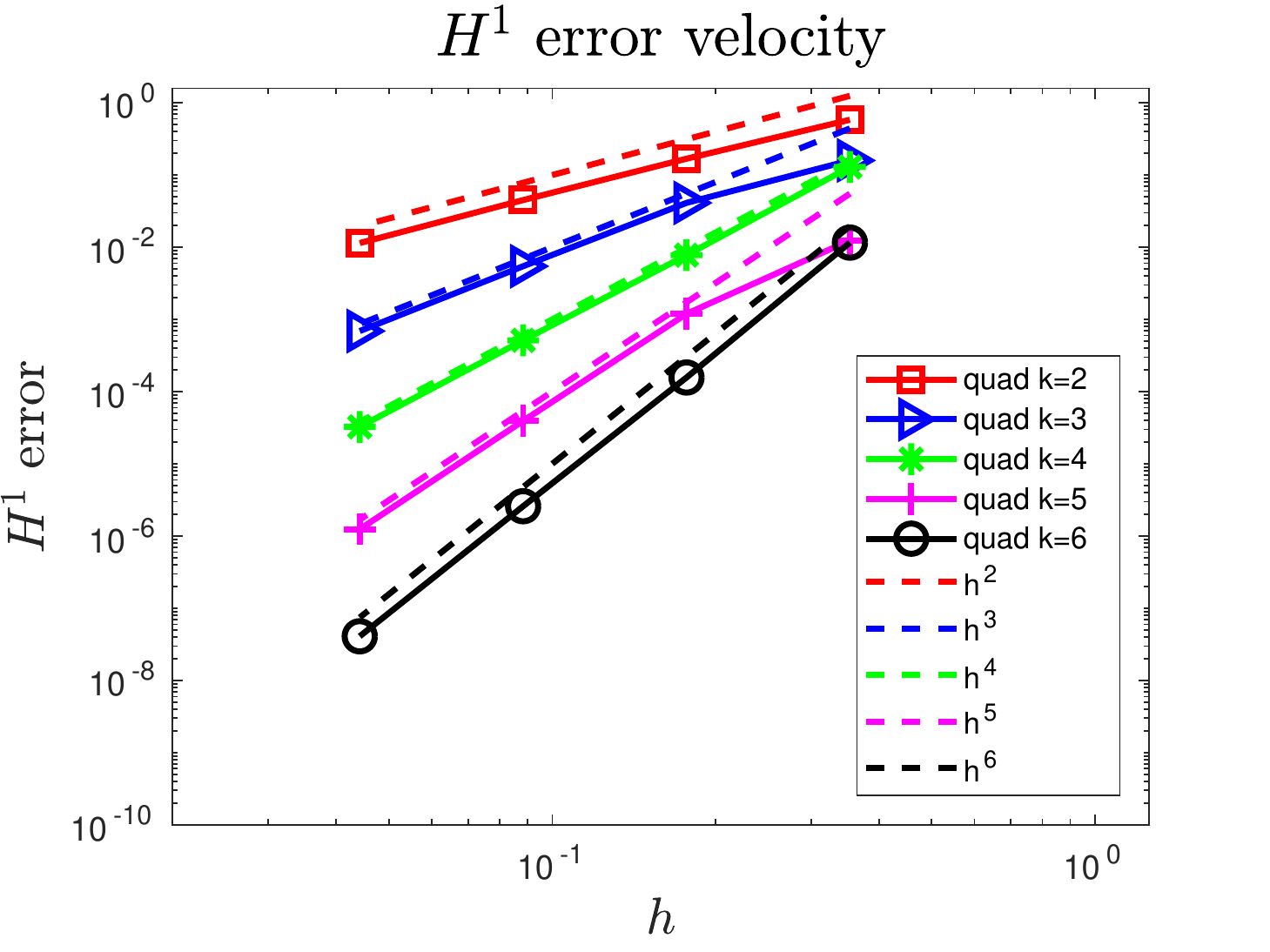} &
\includegraphics[width=\graphsize\textwidth]{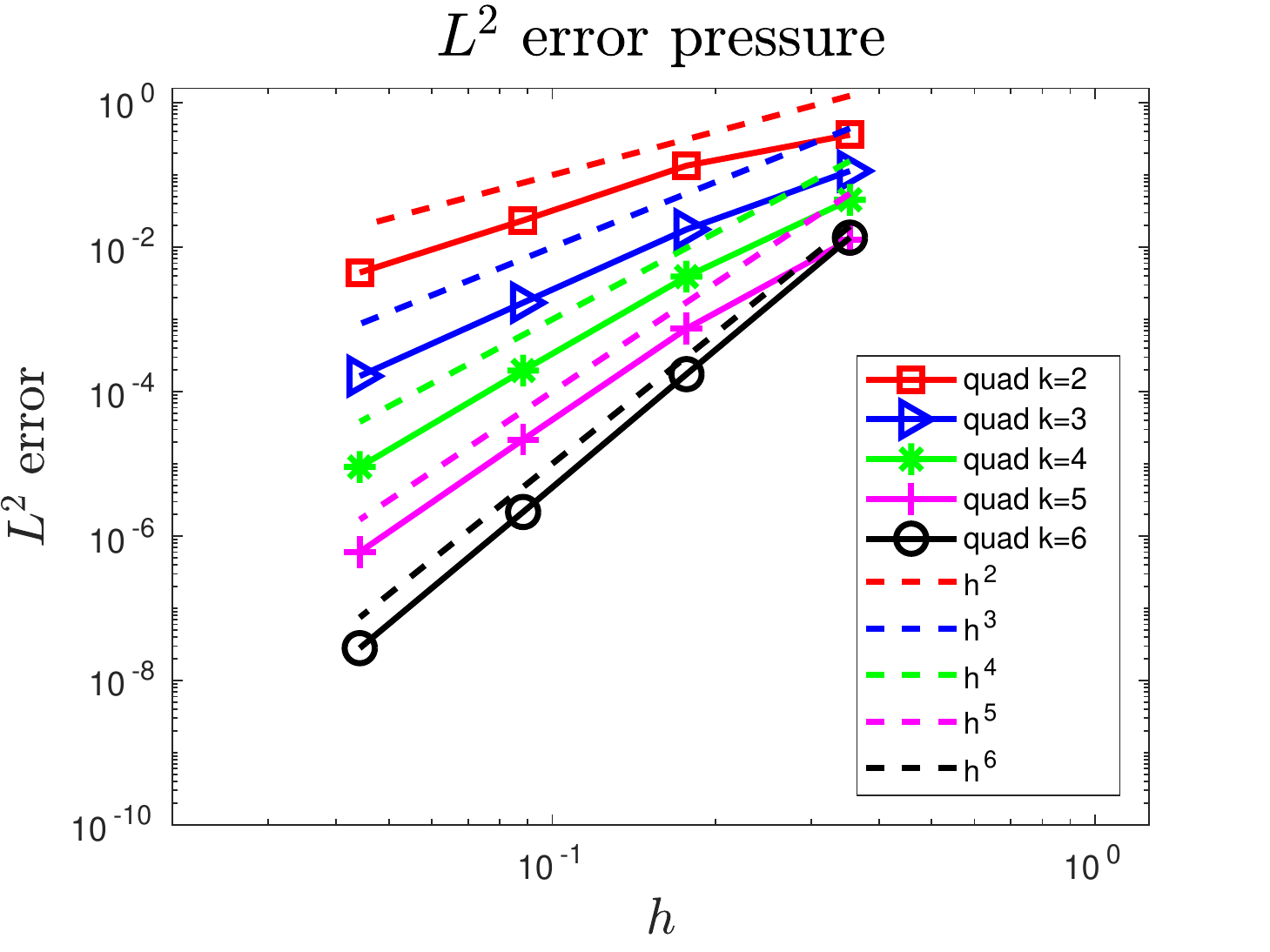} \\
\end{tabular}
\end{center}
\caption{Stokes Problem. Convergence lines for the \texttt{quad} meshes with $k=2,3,4,5$ and 6.}
\label{fig:quadStokes}
\end{figure}

\begin{figure}[!htb]
\begin{center}
\begin{tabular}{cc}
\includegraphics[width=\graphsize\textwidth]{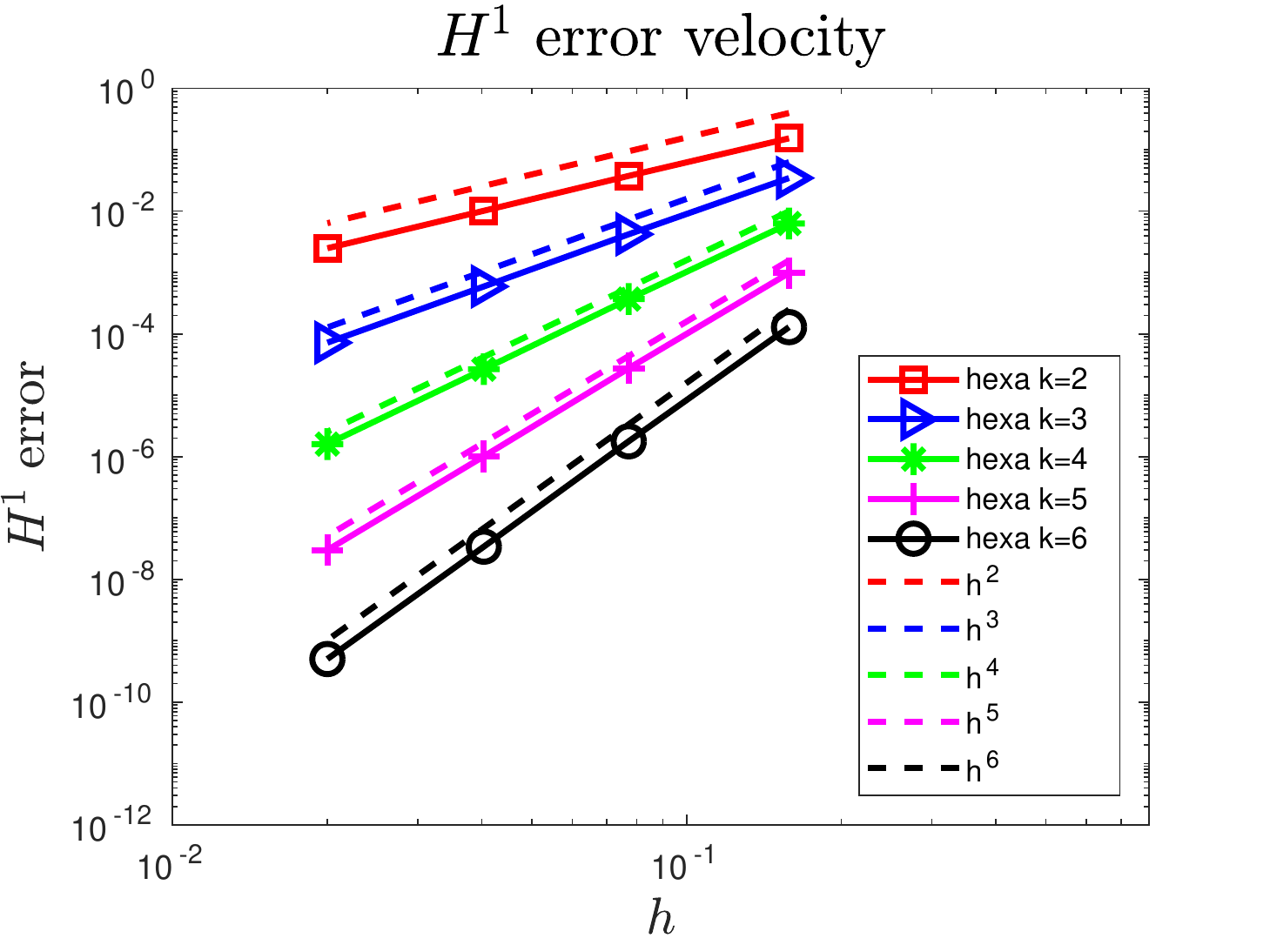} &
\includegraphics[width=\graphsize\textwidth]{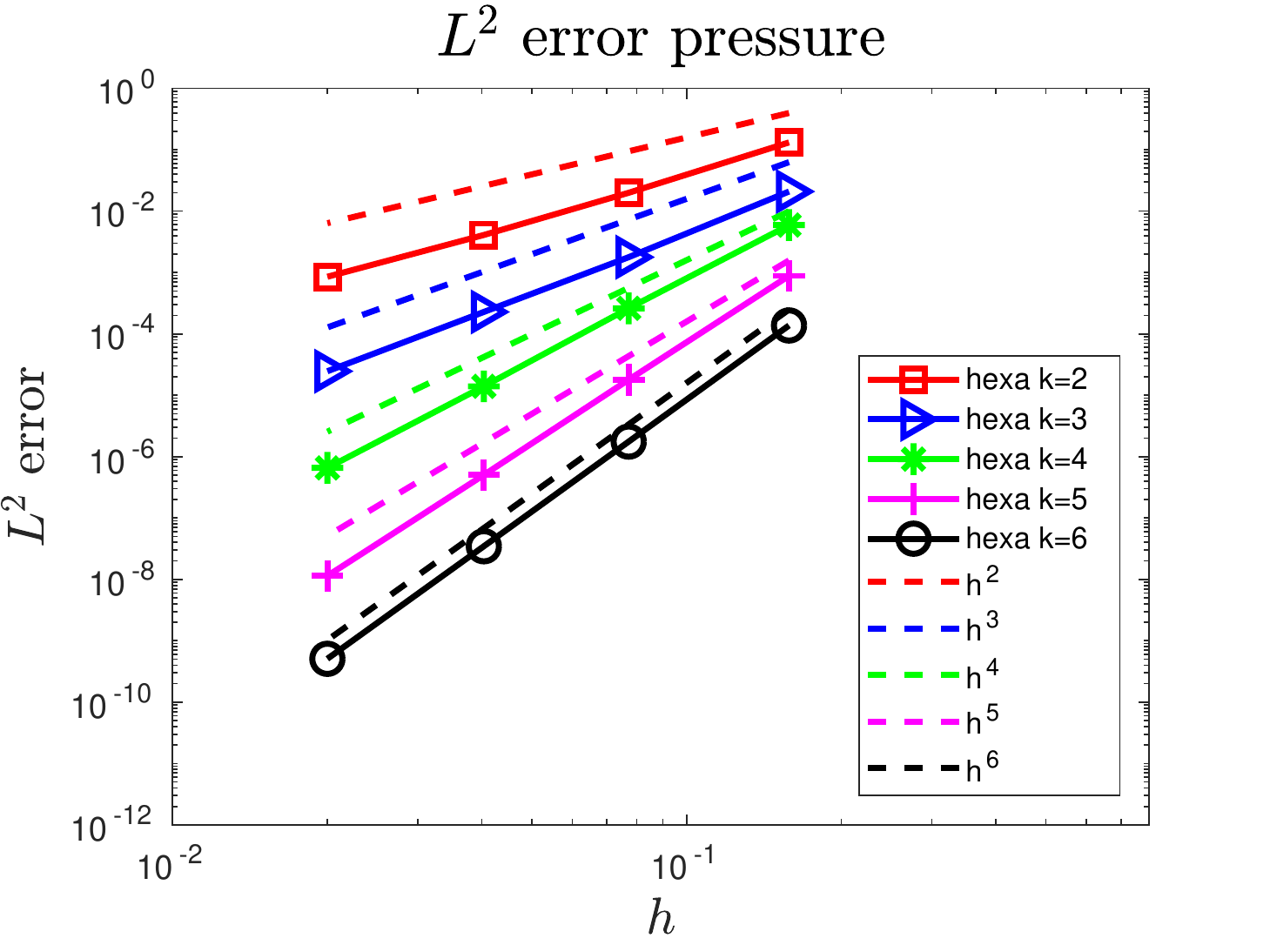} \\
\end{tabular}
\end{center}
\caption{Stokes Problem. Convergence lines for the \texttt{hexa} meshes with $k=2,3,4,5$ and $6$.}
\label{fig:hexaStokes}
\end{figure}

\begin{figure}[!htb]
\begin{center}
\begin{tabular}{cc}
\includegraphics[width=\graphsize\textwidth]{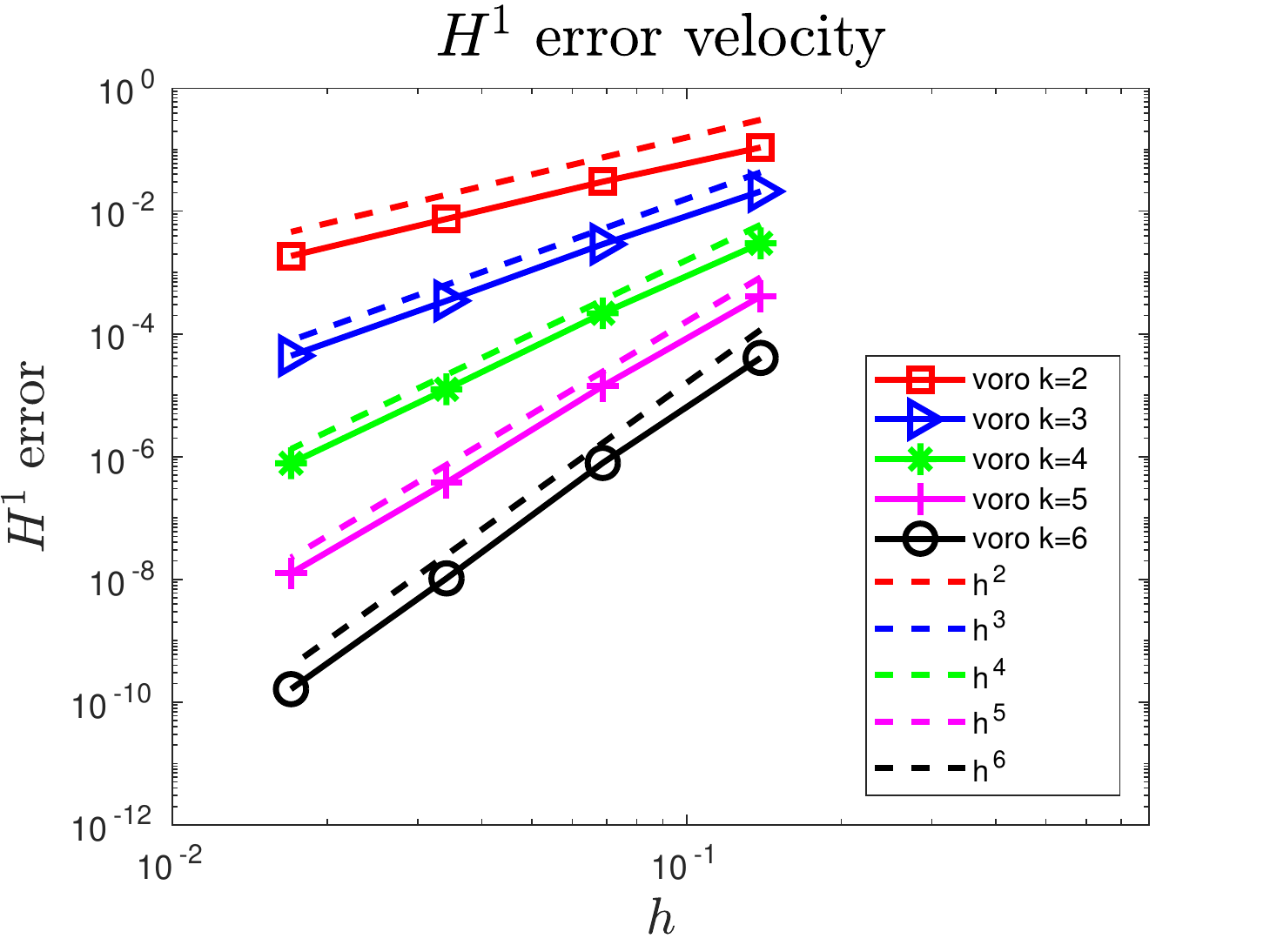} &
\includegraphics[width=\graphsize\textwidth]{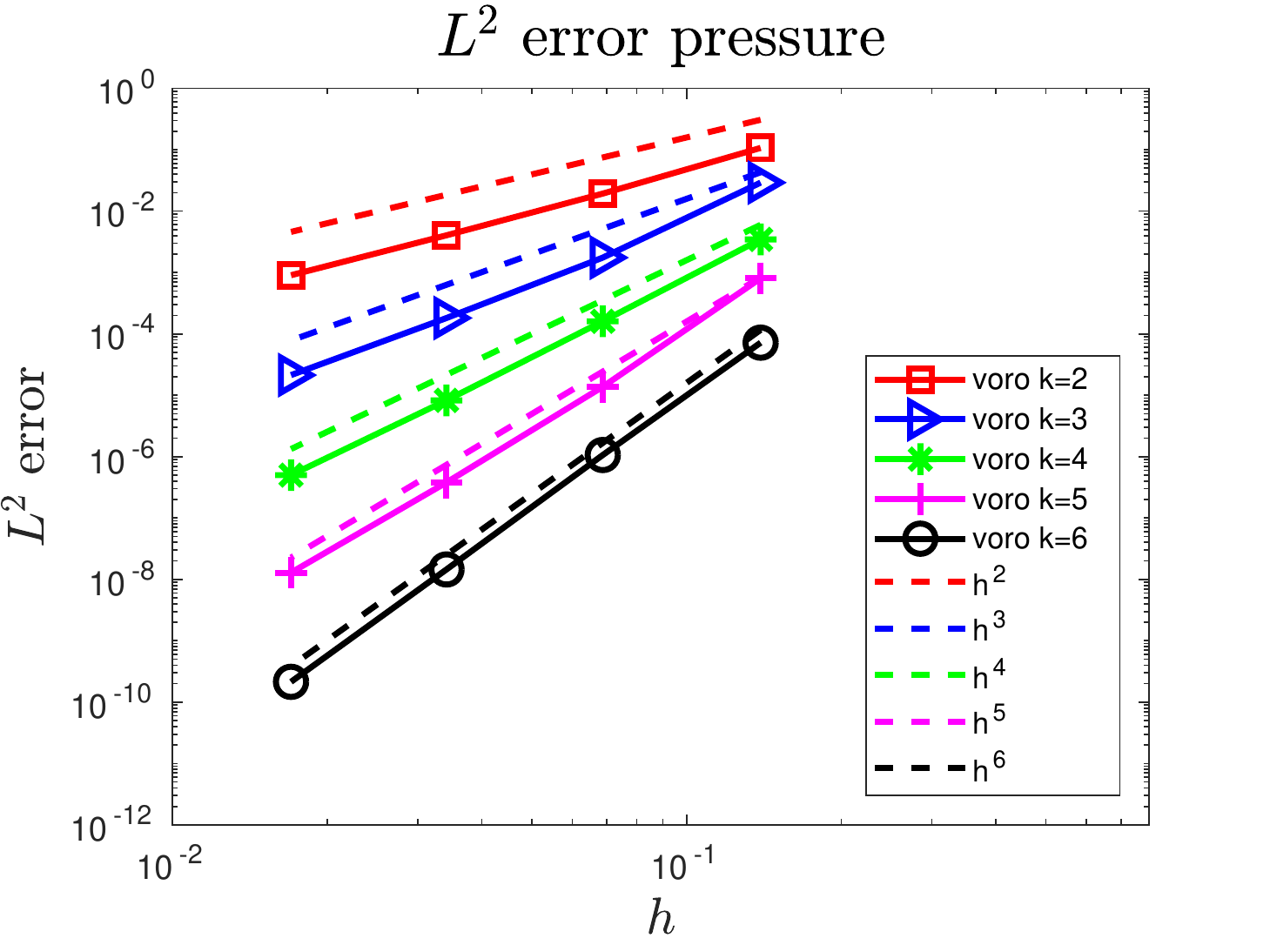} \\
\end{tabular}
\end{center}
\caption{Stokes Problem. Convergence lines for the \texttt{voro} meshes with $k=2,3,4,5$ and $6$.}
\label{fig:voroStokes}
\end{figure}

\subsection{Darcy Equations}\label{sub:darcy}

In this subsection we are solving the Darcy problem
\begin{equation}
\left\{
\begin{aligned}
& \text{find $(\uu_h,\, p_h) \in \VV_h \times Q_h$ s.t.}
\\
& a_h^0(\uu_h, \, \vv_h) + b(\vv_h, \, p_h) = 0 
&\qquad &\text{for all $\vv_h \in \VV_h$,}
\\
& b(\uu_h, \, q_h) = (g, \, q_h) 
&\qquad &\text{for all $q_h \in Q_h$.}
\end{aligned}
\right.
\label{eqn:darcyProb}
\end{equation}
where the bilinear operators $a^{0}_h(\cdot,\cdot)$ and $b(\cdot,\cdot)$ are 
defined in Equations~\eqref{eq:ah0} and~\eqref{eq:bform}, respectively.
We the continuous version of \eqref{eqn:darcyProb} whose exact solution is the pair 
$$
\uu(x,\,y) = \left(
\begin{array}{r}
-\pi \sin(\pi x)\,\cos(\pi y)\\
-\pi \cos(\pi x)\,\sin(\pi y)
\end{array}
\right) \,,
\qquad
\p(x,\,y) = \cos(\pi x)\,\cos(\pi y)\,.
$$

In Figures~\ref{fig:quadDarcy},~\ref{fig:hexaDarcy} and~\ref{fig:voroDarcy} we provide the convergence lines.
The trend of the error is the expected one for the cases $k=2,3,4$ and $5$.
However, in the last step of the convergence lines for $k=6$ associate 
with the  $\text{\texttt{error}}(\uu, L^2)$ the lines does not follow the theoretical trend.
This fact is probably due to the matrix conditioning.
Indeed, we are solving a large linear system of high degree and the error is close to the machine precision.
We get the expected trend also for $\text{\texttt{error}}(\uu, H^1)$, 
but, as for $\text{\texttt{error}}(\uu, L^2)$, in the last steps of $k=6$ it has a plateau around $10^{-12}$.

\begin{figure}[!htb]
\begin{center}
\begin{tabular}{cc}
\includegraphics[width=\graphsize\textwidth]{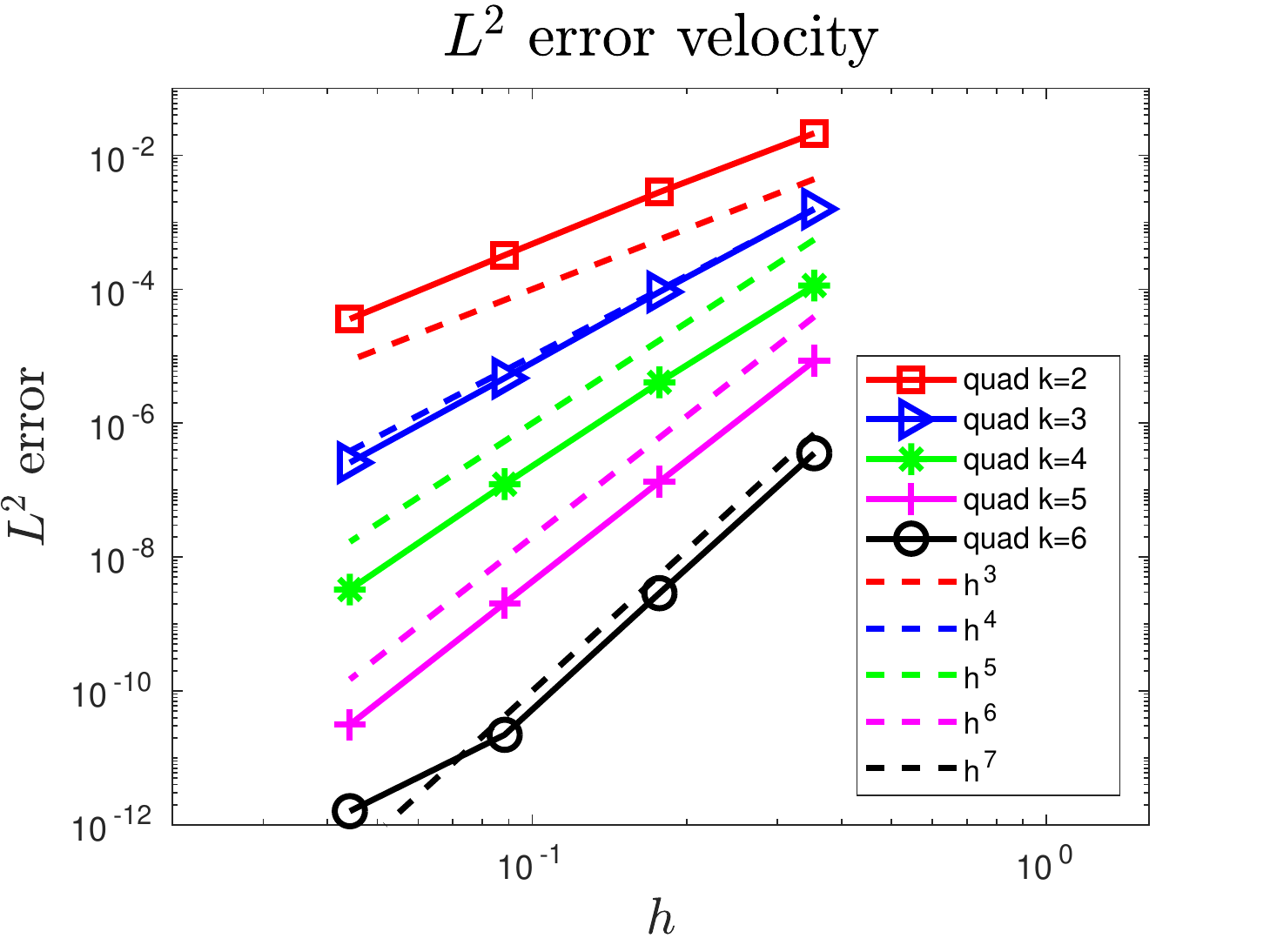} &
\includegraphics[width=\graphsize\textwidth]{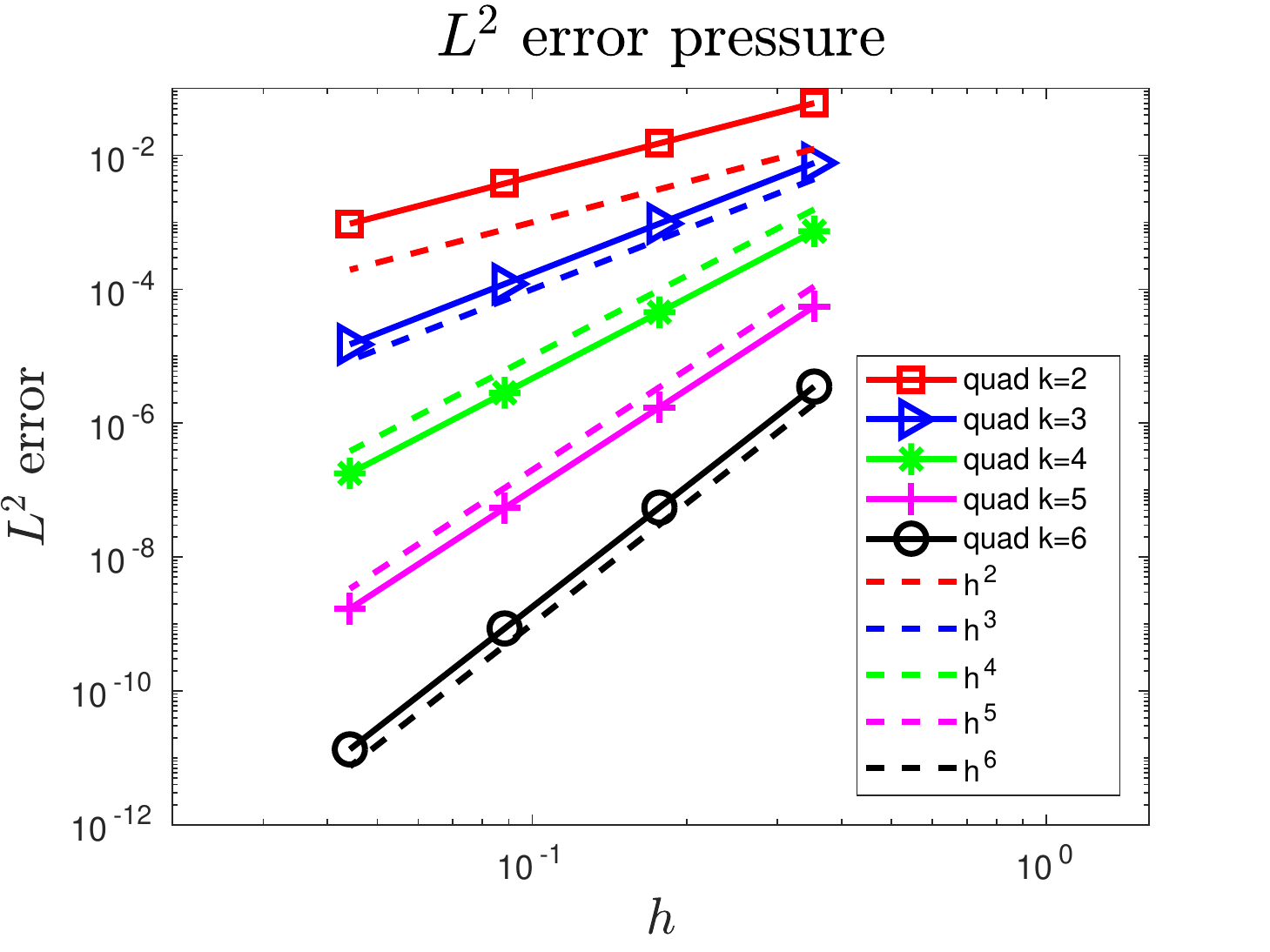} \\
\end{tabular}
\end{center}
\caption{Darcy Problem. Convergence lines for the \texttt{quad} meshes with $k=2,3,4,5$ and 6.}
\label{fig:quadDarcy}
\end{figure}

\begin{figure}[!htb]
\begin{center}
\begin{tabular}{cc}
\includegraphics[width=\graphsize\textwidth]{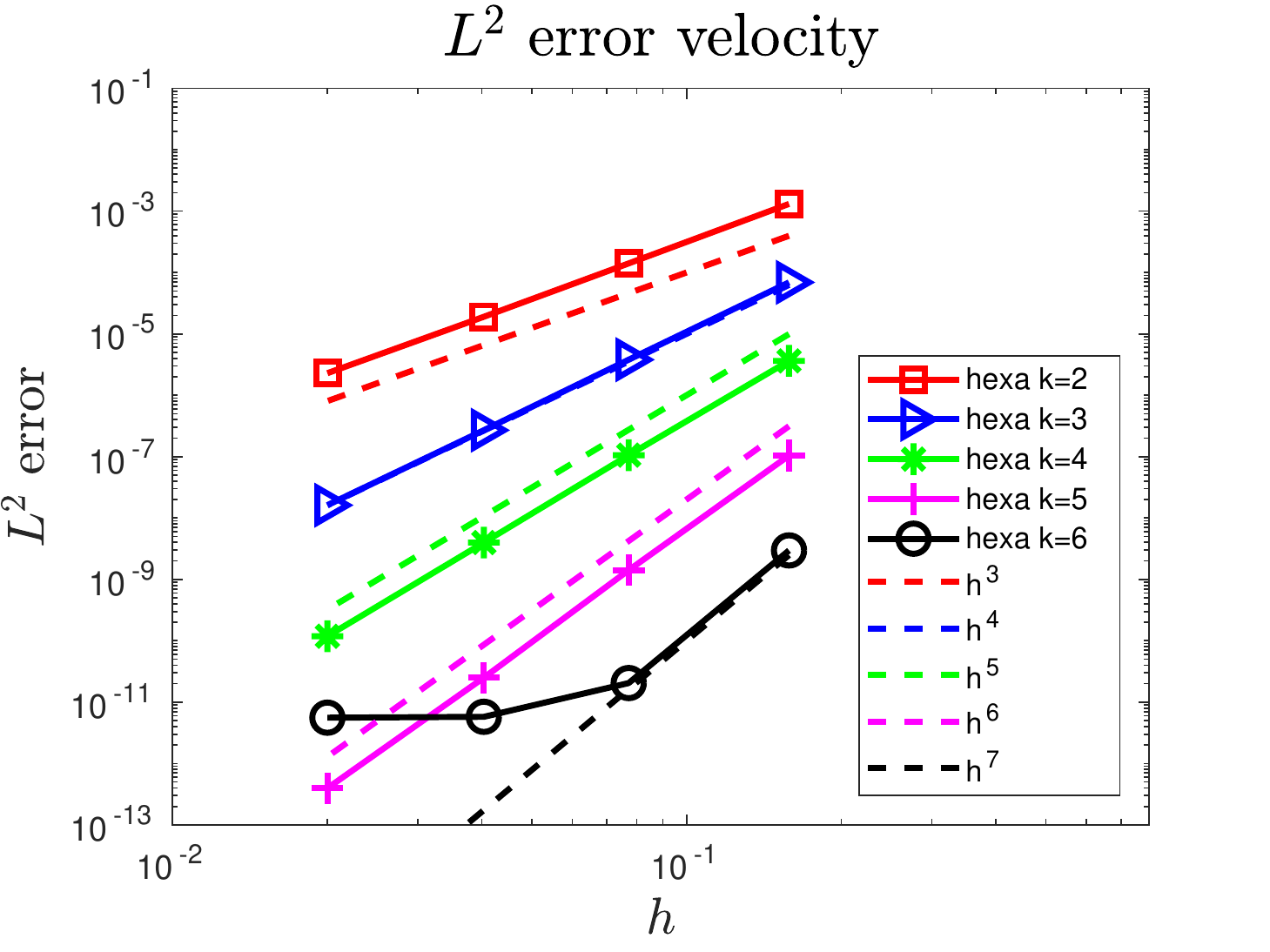} &
\includegraphics[width=\graphsize\textwidth]{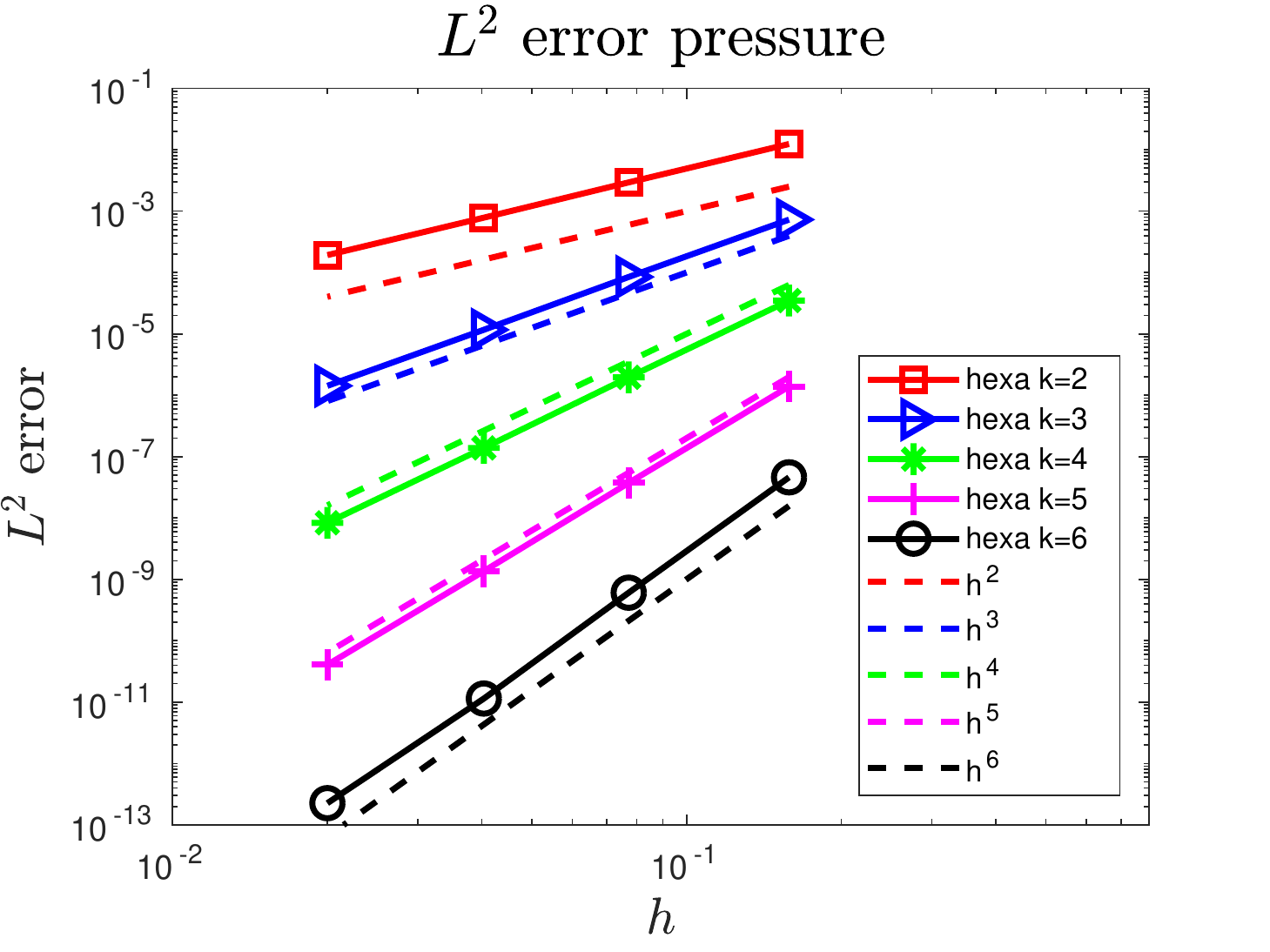} \\
\end{tabular}
\end{center}
\caption{Darcy Problem. Convergence lines for the \texttt{hexa} meshes with $k=2,3,4,5$ and 6.}
\label{fig:hexaDarcy}
\end{figure}

\begin{figure}[!htb]
\begin{center}
\begin{tabular}{cc}
\includegraphics[width=\graphsize\textwidth]{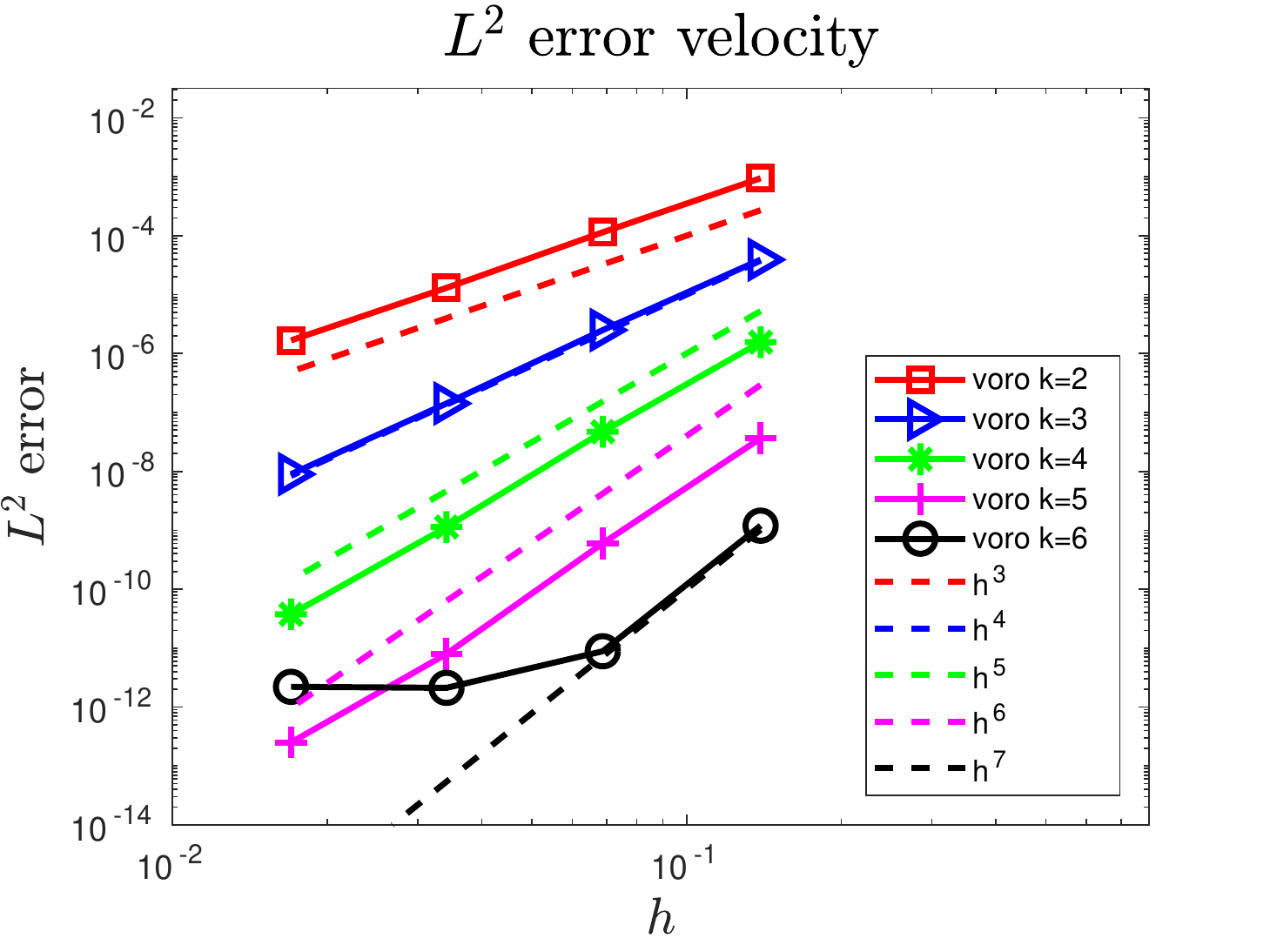} &
\includegraphics[width=\graphsize\textwidth]{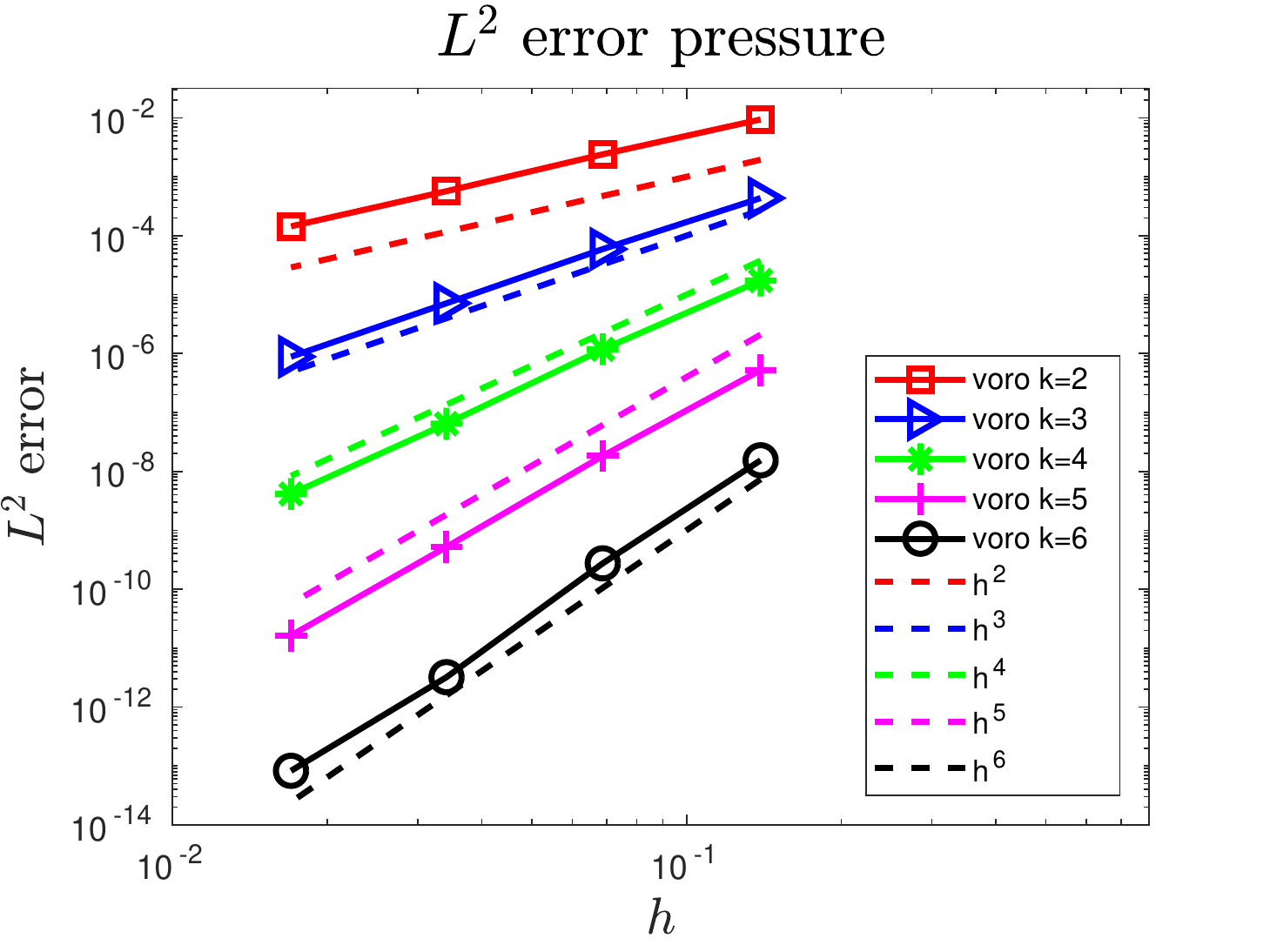} \\
\end{tabular}
\end{center}
\caption{Darcy Problem. Convergence lines for the \texttt{voro} meshes with $k=2,3,4,5$ and 6.}
\label{fig:voroDarcy}
\end{figure}

\subsection{Navier-Stokes Equations}\label{sub:navierStokes}

In this subsection we are solving the Navier-Stokes problem (in the ``gradient form'')
\begin{equation}
\left\{
\begin{aligned}
& \text{find $(\uu_h,\, p_h) \in \VV_h \times Q_h$ s.t.}
\\
& a_h^{\Gr}(\uu_h, \, \vv_h) + c_h(\uu_h; \, \uuh, \, \vv_h) +  b(\vv_h, \, p_h) = (\ff_h, \, \vv_h) 
&\qquad &\text{for all $\vv_h \in \VV_h$,}
\\
& b(\uu_h, \, q_h) = 0
&\qquad &\text{for all $q_h \in Q_h$.}
\end{aligned}
\right.
\label{eqn:navierStokesProb}
\end{equation}
where the bilinear operators $a^{\Gr}_h(\cdot,\cdot)$ and $b(\cdot,\cdot)$ are 
defined in Equations~\eqref{eq:ahGr} and~\eqref{eq:bform}, respectively,
$c_h(\cdot;\,\cdot,\,\cdot)$ is the trilinear operator defined in Equation~\eqref{eq:c_h} and the discrete right-hand side is defined in \eqref{eq:fh}.
Also in this case we fix the load and the boundary condition in such a way that the 
exact solution related to \eqref{eqn:navierStokesProb} is 
$$
\uu(x,\,y) = \left(
\begin{array}{r}
-0.5\,\cos(x)\,\cos(x)\,\cos(y)\,\sin(y)\\
 0.5\,\cos(y)\,\cos(y)\,\cos(x)\,\sin(x)
\end{array}
\right) \,,
\qquad
\p(x,\,y) = \sin(x)-\sin(y)\,.
$$

In Figures~\ref{fig:quadNavierStokes},~\ref{fig:hexaNavierStokes} and~\ref{fig:voroNavierStokes}, 
we show the convergence lines for $k=2,3,4$ and 5.
For \texttt{quad} meshes the trend of such errors is the expected one, see Figure~\ref{fig:quadNavierStokes}.
In the case of the sets of \texttt{hexa} and \texttt{voro} meshes,
we recover the expected trend for $k=2,3$ and 4.
When we consider a degree $k=5$, the last part of the convergence lines do not follow the theoretical trend.
This fact is not so evident for the $\text{\texttt{error}}(\uu, H^1)$, but it becomes clearer in $\text{\texttt{error}}(\p, L^2)$.
Such bad behaviour it is not addicted to the robustness of the virtual element method, 
but it is due to the machine precision.
The error $\text{\texttt{error}}(\uu, L^2)$ has the expected trend,
but it suffers for $k=5$ at the last step in a similar way as $\text{\texttt{error}}(\uu, H^1)$.

\begin{figure}[!htb]
\begin{center}
\begin{tabular}{ccc}
\includegraphics[width=\graphsize\textwidth]{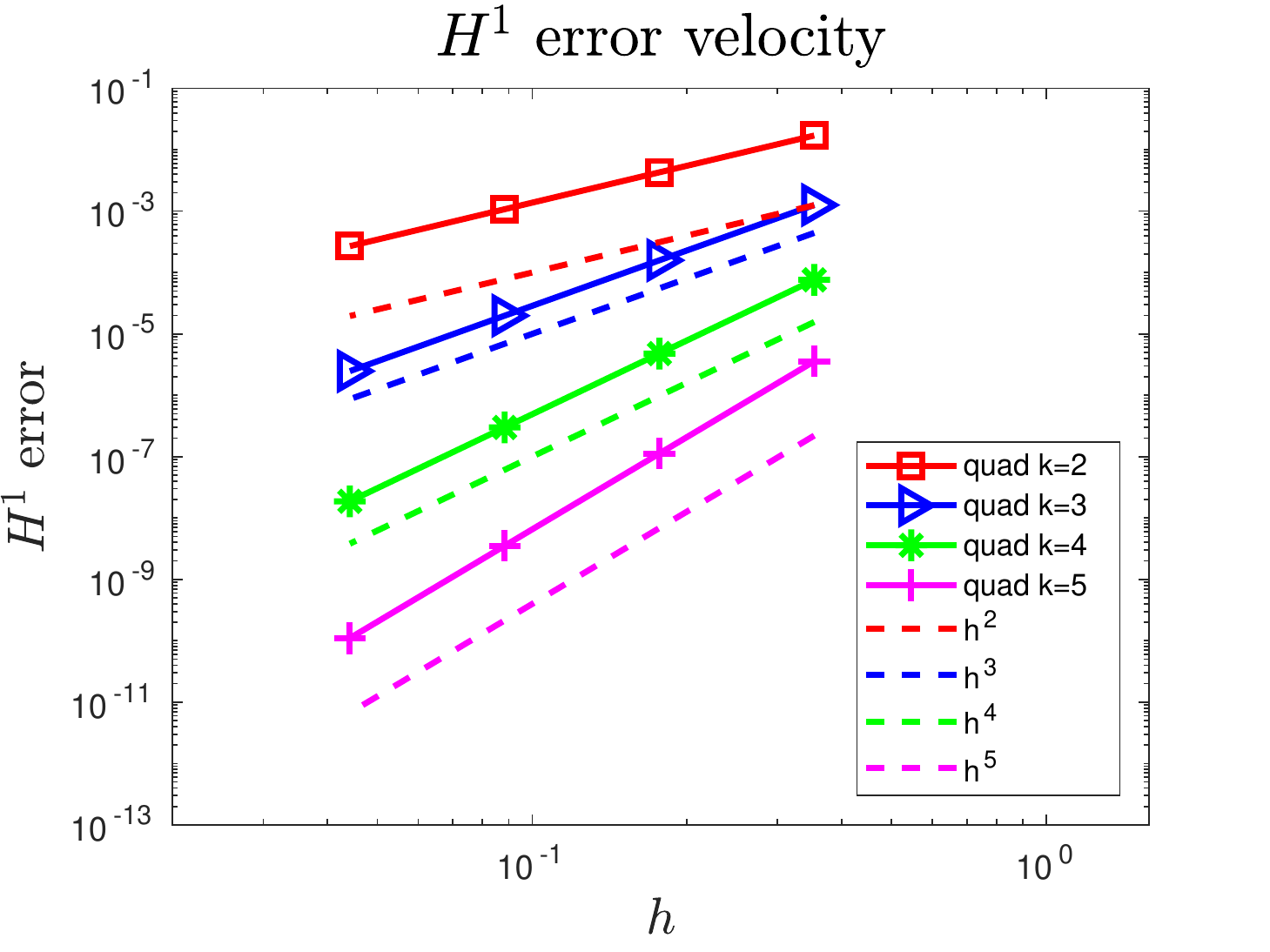} &
\includegraphics[width=\graphsize\textwidth]{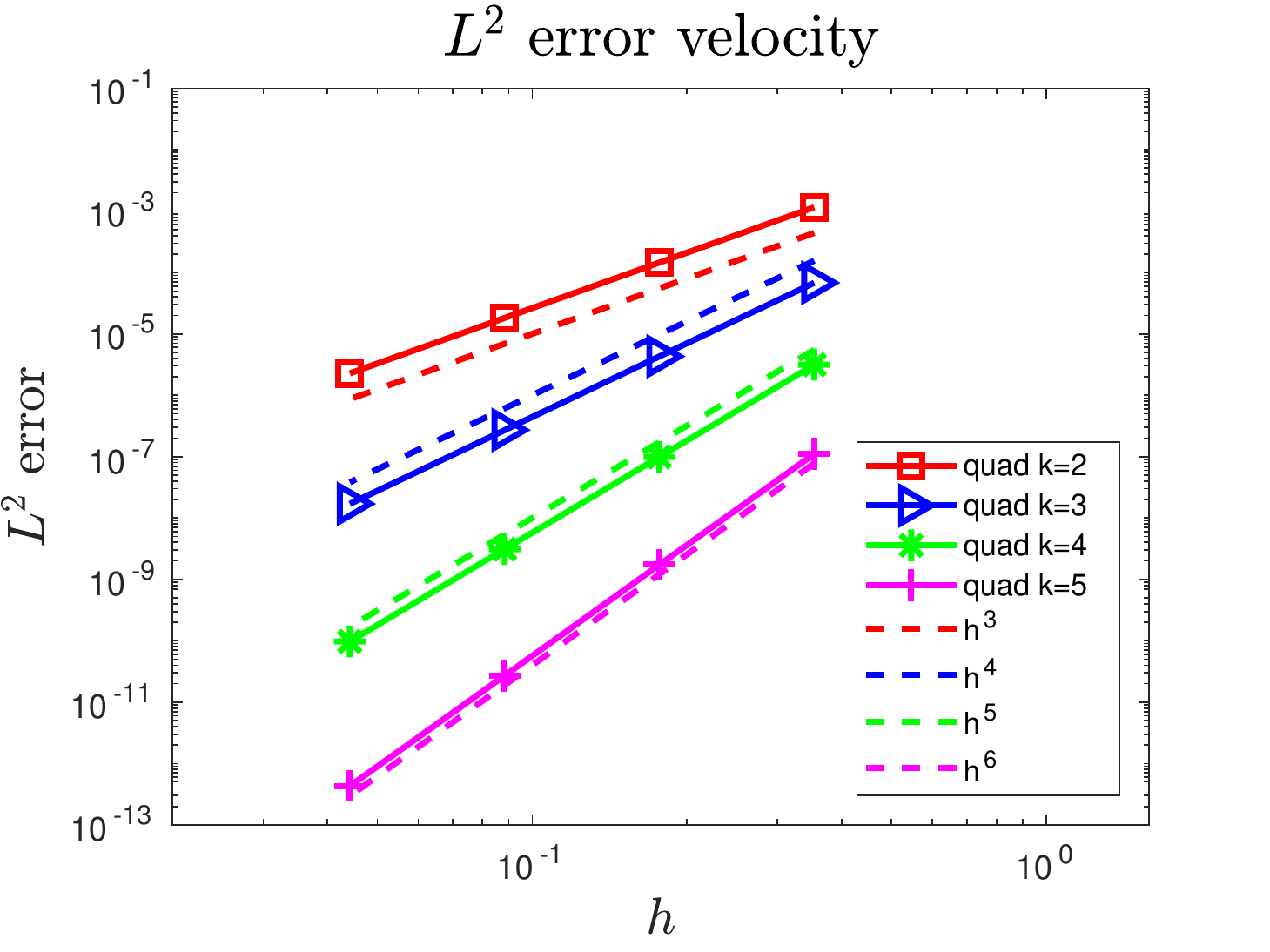} \\
\end{tabular}
\end{center}
\caption{Navier--Stokes Problem. Convergence lines for the \texttt{quad} meshes with $k=2,3,4$ and 5.}
\label{fig:quadNavierStokes}
\end{figure}

\begin{figure}[!htb]
\begin{center}
\begin{tabular}{cc}
\includegraphics[width=\graphsize\textwidth]{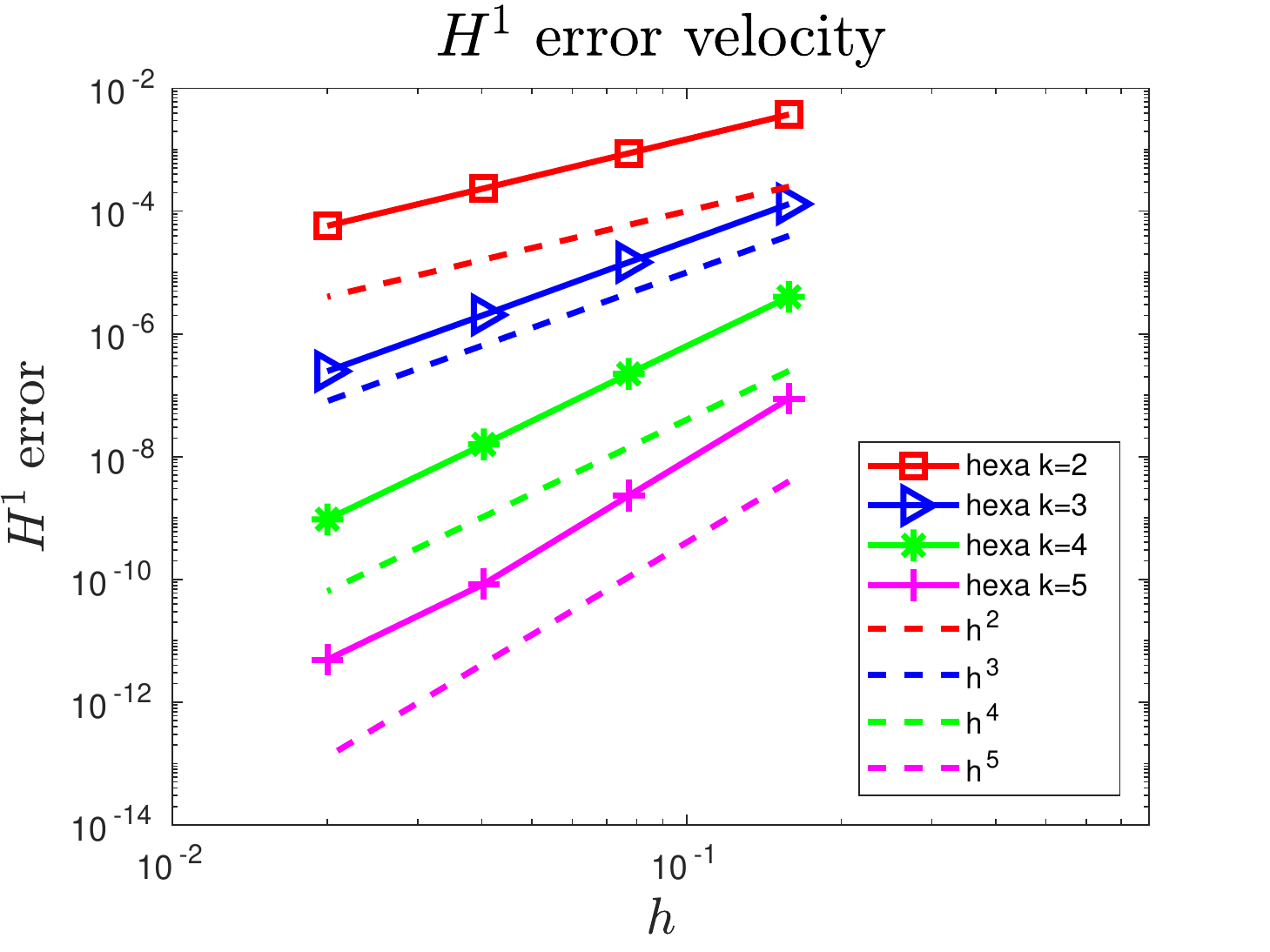} &
\includegraphics[width=\graphsize\textwidth]{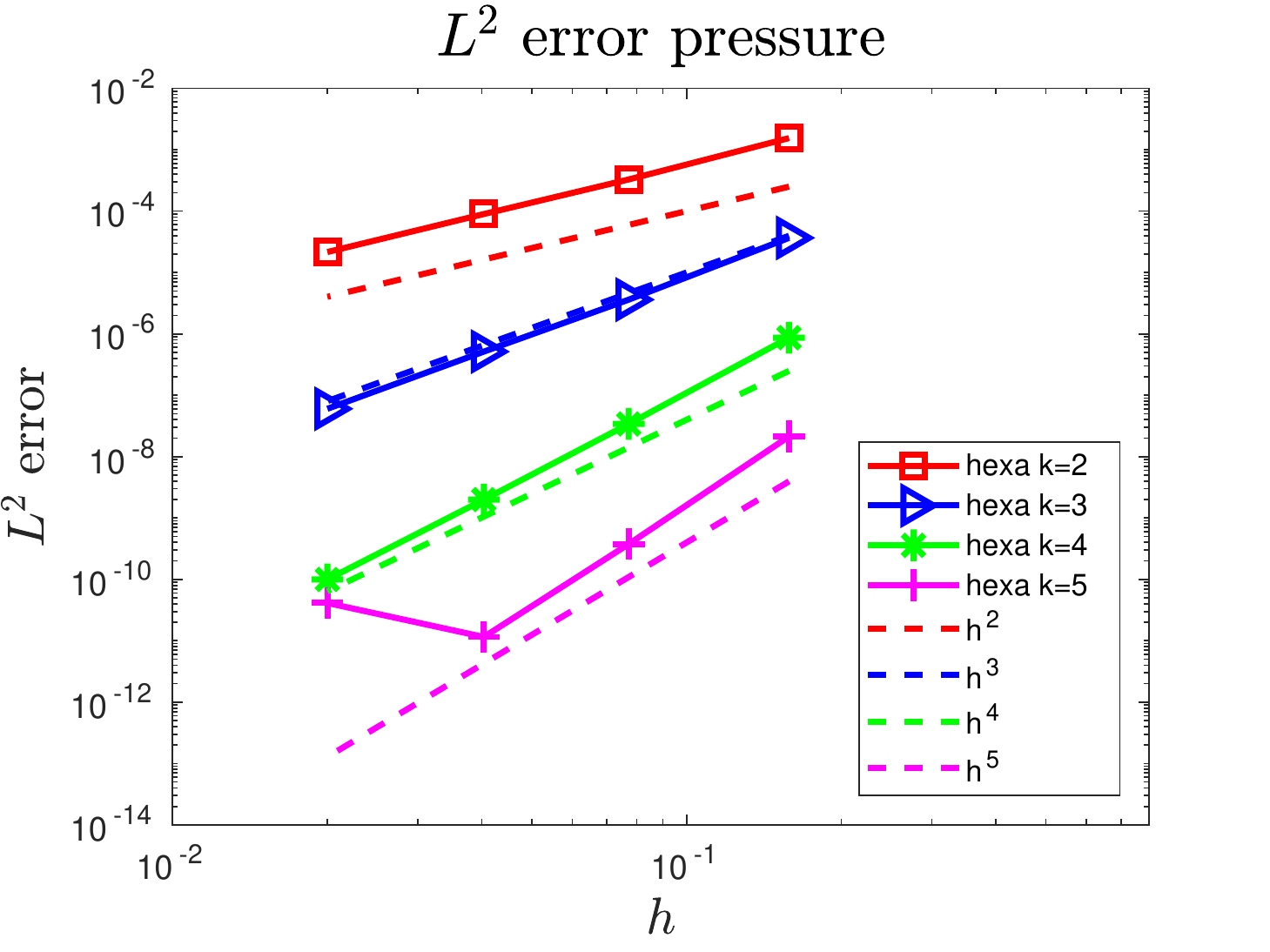} \\
\end{tabular}
\end{center}
\caption{Navier--Stokes Problem. Convergence lines for the \texttt{hexa} meshes with $k=2,3,4$ and 5.}
\label{fig:hexaNavierStokes}
\end{figure}

\begin{figure}[!htb]
\begin{center}
\begin{tabular}{cc}
\includegraphics[width=\graphsize\textwidth]{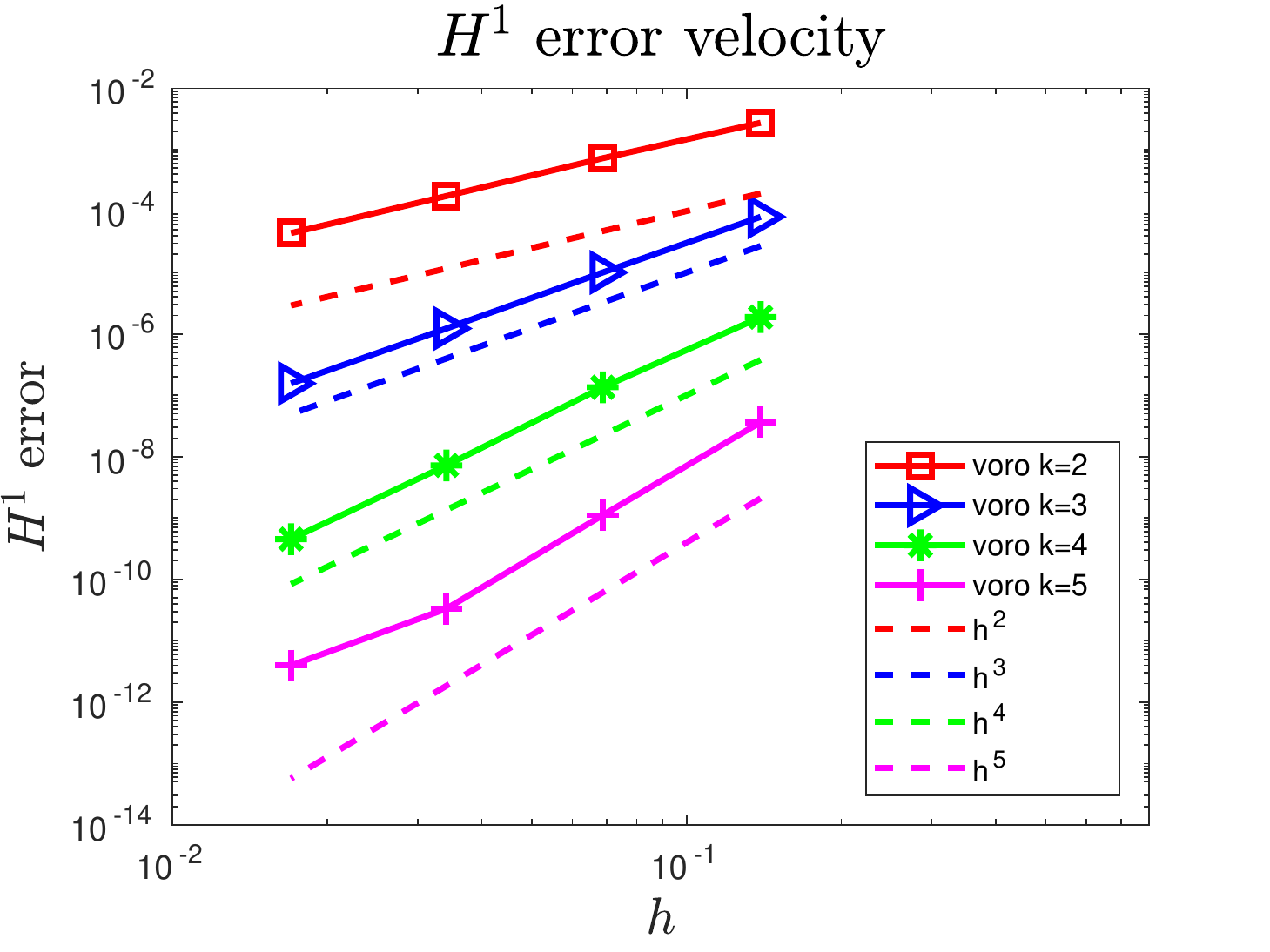} &
\includegraphics[width=\graphsize\textwidth]{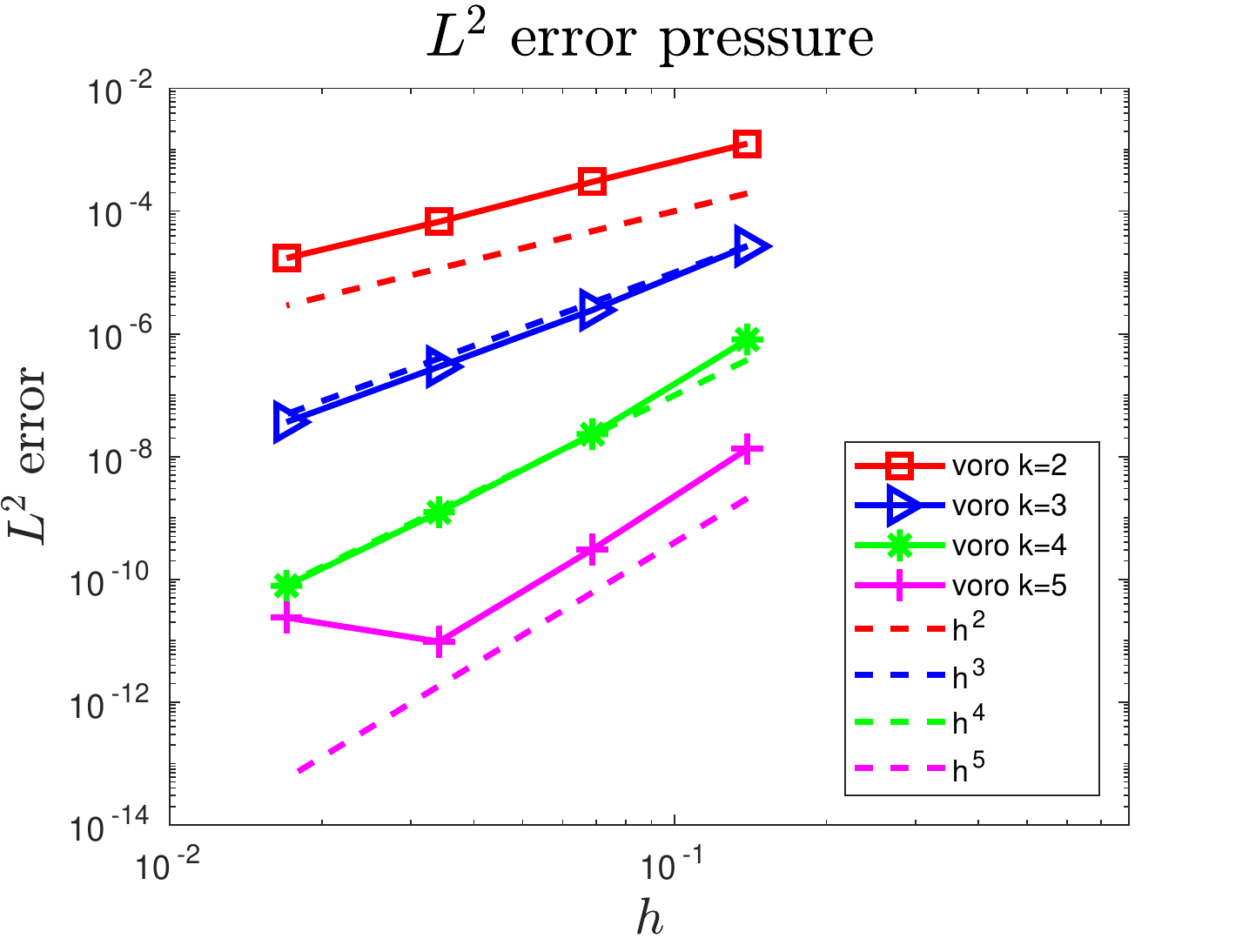} \\
\end{tabular}
\end{center}
\caption{Navier--Stokes Problem.  Convergence lines for the \texttt{voro} meshes with $k=2,3,4$ and 5.}
\label{fig:voroNavierStokes}
\end{figure}

\section{Conclusion}

In this paper we focus on the technical aspects of VEM
when we are considering partial differential equations in mixed form.
More specifically, we gave the essential ``bricks'' 
to make both projectors and differential operators starting from the proposed virtual element spaces.
This deep analysis allowed us to manage high VEM approximation order and
solve a wide variety of problems (Stokes, Brinkman, Darcy and Navier--Stokes).
Numerical results show that VEM are particularly robust with high-order,
since we reach error values close to the machine precision when we are taking high degree and fine meshes.

\section*{Acknowledgments}

The authors would like to acknowledge INDAM-GNCS for the support.
Moreover they would like to thank Lourenco Beir{\~a}o~da Veiga and Alessandro Russo for 
many helpful discussions and suggestions.

\bibliographystyle{plain}
\bibliography{VEM}

\end{document}